\numberwithin{equation}{section}
\newtheorem{thm}{Theorem}[section]
\newtheorem{prop}[thm]{Proposition}
\newtheorem{lemma}[thm]{Lemma}
\newtheorem{conj}[thm]{Conjecture}
\newtheorem{prob}[thm]{Problem}
\theoremstyle{remark}
\newcommand{\EE}{\mathbb{E}}
\newcommand{\RR}{\mathbb{R}}
\newcommand{\CC}{\mathbb{C}}
\newcommand{\ol}{\overline}
\newcommand{\e}{\varepsilon}
\newcommand{\be}{\begin{equation}}
\newcommand{\ee}{\end{equation}}
\begin{document}

\title[Random harmonic polynomials]{On the average number of zeros of random harmonic polynomials with i.i.d.\@ coefficients: precise asymptotics}
\date{}

\author[E. Lundberg, A. Thomack]{Erik Lundberg and Andrew Thomack}

\begin{abstract}
Addressing a problem posed by W. Li and A. Wei (2009), we investigate the average number of (complex) zeros of a random harmonic polynomial $p(z) + \overline{q(z)}$ sampled from the Kac ensemble, i.e.,  
where the coefficients are independent identically distributed centered complex Gaussian random variables.
We establish a precise asymptotic, showing that when $\deg p = \deg q = n$ tends to infinity the average number of zeros is asymptotic to $\frac{1}{2} n \log n$.  We further consider the average number of zeros restricted to various regions in the complex plane leading to interesting comparisons with the classically studied case of analytic Kac polynomials.
We also consider deterministic extremal problems for harmonic polynomials with coefficient constraints; using an indirect probabilistic method we show the existence of harmonic polynomials with unimodular coefficients having at least $\frac{2}{\pi} n \log n + O(n)$ zeros.
We conclude with a list of open problems.
\end{abstract}

\maketitle

\section{Introduction}

Let $H_{n,m}(z) = p(z) + \ol{q(z)}$ denote a (complex) harmonic polynomial with $n= \deg p \ge \deg q = m > 0$.
In 1992, T. Sheil-Small posed the problem of studying the maximum valence (number of zeros) of $H_{n,m}$ and conjectured that for each $n$ the maximum, taken over all $p$ of degree $n$ and $q$ of degree less than $n$, is $n^2$.  A.S. Wilmshurst proved this conjecture in his thesis \cite{W1}, \cite{W2} using Bezout's Theorem to establish the upper bound and constructing examples to show sharpness of the Bezout bound for each $m=n-1$.  Wilmshurst also stated a refinement of the conjecture that for each $n,m$ with $m<n$ the maximum number of zeros, taken over $p$ of degree $n$ and $q$ of degree $m$, increases linearly in $n$ for each fixed $m$, namely, the maximum is $m(m-1) + 3n-2$. 
The $m=n-1$ case of the conjecture follows from Wilmshurst's results. D. Khavinson and G. Swiatek \cite{KhSw} verified the conjectured upper bound $3n-2$ for the case $m=1$ using an indirect technique based on Fatou's theorem from holomorphic dynamics, and L. Geyer \cite{Geyer} used further tools from holomorphic dynamics, namely Thurston equivalence of topological polynomials, to show sharpness of the upper bound $3n-2$.  Despite this progress, the conjecture turned out to be false in general; counterexamples for the case $m=n-3$ were produced in \cite{LLL}.  Further counterexamples were constructed in \cite{HLLM}, \cite{LS}, and an indirect probabilistic method was used in \cite{LundRand} to show the existence of examples for each $n,m$ with at least $\lceil n \sqrt{m} \rceil$ zeros (a similar method is used in Section \ref{sec:existenceproof} below to study a constrained version of Sheil-Small's problem).  This latter result violates Wilmshurst's conjecture for all $m>9$ and shows that, if the true maximum is indeed linear in $n$ for each fixed $m$ then, contrary to Wilmshurst's conjecture, the linear growth rate must depend on $m$.  

Motivation for studying harmonic mappings comes from several sources, including minimal surface theory, where they appear in the Weierstrauss-Enneper parameterization of a minimal surface \cite{Duren}, differential geometry where they appear as energy minimizing maps between Riemannian manifolds \cite{Manifolds}, numerical analysis, where they are used for efficient mesh generation \cite{Smith}, computer animation, where they have been used (by Pixar) for character articulation \cite{Pixar}, and gravitational lensing, where they appear as lensing maps used to model the effect of intervening massive objects on background light sources \cite{Petters}.
In most of these applications, univalent (one-to-one) mappings are desired, but in the latter mentioned application to the theory of gravitational lensing, multi-valent mappings are especially interesting as they give rise to multiple observed images of a single background source.  Multi-valent harmonic maps are of importance in additional areas of mathematical physics, such as in elasticity theory, namely, in the study of critical points of the torsion function which is formally equivalent to a problem in another physical setting concerning localization of eigenfunctions \cite{Torsion}.  The basic reason for the prevalence of harmonic mappings in two-dimensional problems of mathematical physics is potential theoretic; the gradient of a harmonic (real-valued) planar potential is not analytic but rather anti-analytic (i.e., the conjugate of an analytic function), for instance, the physical vector fields modeled in an engineering course on complex variables (planar electrostatic fields, heat flow fields of steady state temperature distributions, and velocity fields of ideal fluids) are anti-analytic \cite{SaffSnider},  \cite{Fisher}.  Additional analytic terms arising from external confining potentials or other physical considerations can then lead to a sum of analytic and anti-analytic expressions, i.e., complex harmonic functions.  Equilibrium problems in a variety of such physical settings then lead to valence problems for harmonic mappings.  In addition to direct applications in mathematical physics, problems on valence of harmonic mappings serve as prototypes for equilibrium enumeration problems in more challenging non-harmonic settings such as Maxwell's problem on equilibria in electrostatic fields generated by point charges \cite{GaNoSh} or enumeration of equilibria in the circular restricted $n$-body problem \cite{BaLe}; see \cite{Newton} for a discussion of these problems and comparison with the harmonic setting of the image counting problem from gravitational lensing.

Sheil-Small's problem on the valence of harmonic polynomials is close to the study of the maximal number of lensed image in gravitational lensing, and an extension \cite{KN} of the above-mentioned result \cite{KhSw} to certain rational harmonic functions (again relying on holomorphic dynamics) confirmed astronomer S. Rhie's conjecture \cite{Rhie} on the maximal number of images gravitationally lensed by a point mass ensemble.  
Motivated by the connection to gravitational lensing the zeros of rational harmonic functions were investigated further in \cite{Bleher}, \cite{SeteCMFT}, \cite{SeteGrav}, \cite{SetePert}, \cite{Zur2018a}, \cite{Zur2018b}), and we also note in passing that 
subsequent adaptations of the indirect method based on holomorphic dynamics have led to solutions to additional problems including sharp estimates for the topology of quadrature domains \cite{LeeMakarov}, classification of the number of critical points of Green's function on a torus \cite{BergErem}, a proof of Bshouty and Hengartner's conjectured bound for the valence of logharmonic polynomials \cite{KLP}, and sharp estimates for the number of solutions of certain transcendental equations \cite{BergErem2018}.
Particularly relevant to the current paper, we note that the probabilistic study of the valence of harmonic polynomials initiated by W. Li and A. Wei \cite{LiWei} is closely related to the study of stochastic gravitational lensing \cite{Wei}, \cite{PettersStochastic1}, \cite{PettersStochastic2} and uses some of the same tools such as the Kac-Rice formula for vector fields.
It is generally useful in applied settings to understand average case behavior in addition to extremal behavior, so the variety of applications mentioned in the previous paragraph serve as broad motivation for the probabilistic study of harmonic mappings--the setting of the current paper.

\subsection{Zeros of random harmonic polynomials with i.i.d.\@ coefficients}

Given the variability in the possible number of zeros, it is natural to consider the \emph{expected} number of zeros when $H_{n,m}$ is random.
This line of inquiry, which was initiated by Li and Wei in \cite{LiWei}, requires specifying a definition for the word ``random''.  One obvious choice is to sample $p$ and $q$ independently with i.i.d.\@ (independent identically distributed) coefficients, and it is natural to consider standard complex Gaussians.
Hence, we will consider the (harmonic) Kac ensemble where the harmonic polynomial $H(z) = H_{n,m}(z) = p_n(z) + \ol{q_m(z)}$ is randomized by sampling $p_n, q_m$ independently from the analytic Kac ensemble, i.e.,
\be\label{eq:harmonicKac}
H(z) = p_n(z) + \ol{q_m(z)}, \quad p_n(z) = \sum_{k=0}^n A_k z^k, \quad q_m(z) = \sum_{k=0}^m B_k z^k,
\ee
where $A_k,B_k \sim N_{\CC}(0,1)$ are i.i.d.\@ standard complex Gaussians.  We recall that a standard complex Gaussian $Z \sim N_{\CC}(0,1)$ can be expressed as a combination $Z=A+iB, A,B \sim N_{\RR}(0,1/2)$ of real independent Gaussians $A,B$ of mean zero and variance $1/2$.

This case (the harmonic Kac model) was considered in \cite{LiWei} where the average number of zeros in an open set $T$ is expressed as the following integral of a deterministic function (depending on $n$) over $T$.
\begin{thm}[Li, Wei \cite{LiWei}]\label{thm:LiWei}
The expectation $\EE N_H(T)$ of the number of zeros in an open set $T \subseteq \CC$ of a random harmonic polynomial $H(z) = p_n(z)+\ol{q_m(z)}$ sampled from the Kac model satisfies
\begin{equation}
\label{eq:LiWei}
\mathbb{E}N_H(T)=\frac{1}{\pi}\int_T\frac{1}{\lvert z\rvert^2}\frac{r_1^2+r_2^2-2r_{12}^2}{r_3^2\sqrt{(r_1+r_2)^2-4r_{12}^2}}dA(z),
\end{equation}where $dA(z)$ denotes the Lebesgue measure on the plane, and
\begin{align*}
r_3=&\sum_{j=0}^n \lvert z\rvert^{2j}+\sum_{j=0}^m \lvert z\rvert^{2j},\qquad\qquad\quad r_{12}=\left(\sum_{j=1}^nj \lvert z\rvert^{2j}\right)\left(\sum_{j=1}^mj \lvert z\rvert^{2j}\right),
\\r_1=&r_3\sum_{j=1}^nj^2 \lvert z\rvert^{2j}-\left(\sum_{j=1}^nj \lvert z\rvert^{2j}\right)^2,
\ \ r_2=r_3\sum_{j=1}^mj^2 \lvert z\rvert^{2j}-\left(\sum_{j=1}^mj \lvert z\rvert^{2j}\right)^2.
\end{align*}
\end{thm}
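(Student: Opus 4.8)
The identity is an instance of the Kac--Rice formula, and the plan is to realize $H$ as the random planar vector field
\[
F \colon \RR^2 \to \RR^2, \qquad F(x,y) = \bigl(\re H(x+iy),\ \im H(x+iy)\bigr),
\]
and count its zeros. Almost surely $H$ is a non-constant harmonic polynomial, so its zero set is finite, at each zero $\det DF \ne 0$, and at every point $z$ the Gaussian law of $\bigl(F(z),DF(z)\bigr)$ is nondegenerate; the Kac--Rice formula then yields
\[
\EE N_H(T) = \int_T \EE\bigl[\, \lvert \det DF(z)\rvert \ \big|\ F(z)=0\,\bigr] \, p_{F(z)}(0)\, dA(z),
\]
where $p_{F(z)}$ is the density of the $\RR^2$-valued Gaussian $F(z)$ and $\det DF(z)$ is the real $2\times 2$ Jacobian determinant. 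Two ingredients must be computed.

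The density is immediate: for fixed $z$, the variable $H(z) = p_n(z) + \ol{q_m(z)}$ is a \emph{proper} (circularly symmetric) complex Gaussian, since its pseudo-covariance $\EE[H(z)^2]$ vanishes (because $\EE[A_jA_k]=\EE[B_jB_k]=0$ and $p_n\perp q_m$). Hence $\re H(z),\im H(z)$ are i.i.d.\ centered real Gaussians of variance $\tfrac12\EE|H(z)|^2$, with $\EE|H(z)|^2 = \EE|p_n(z)|^2 + \EE|q_m(z)|^2 = \sum_{j=0}^n|z|^{2j} + \sum_{j=0}^m|z|^{2j} = r_3$, so $p_{F(z)}(0) = 1/(\pi r_3)$, which supplies one factor of $r_3^{-1}$ in \eqref{eq:LiWei}. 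For the Jacobian I use the standard identity for planar harmonic maps: since $H_z = p_n'(z)$ and $H_{\bar z} = \ol{q_m'(z)}$, one has $\det DF(z) = \lvert H_z\rvert^2 - \lvert H_{\bar z}\rvert^2 = \lvert p_n'(z)\rvert^2 - \lvert q_m'(z)\rvert^2$.

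The heart of the argument is therefore $\EE\bigl[\, \bigl\lvert \lvert p_n'(z)\rvert^2 - \lvert q_m'(z)\rvert^2 \bigr\rvert \ \big|\ H(z)=0\,\bigr]$, and this is where I expect the main obstacle. First I would record the joint law of the centered jointly Gaussian quadruple $\bigl(p_n(z),q_m(z),p_n'(z),q_m'(z)\bigr)$: the blocks $\{p_n(z),p_n'(z)\}$ and $\{q_m(z),q_m'(z)\}$ are independent, and all pseudo-covariances within each block vanish. The subtlety is that the conditioning event $H(z)=p_n(z)+\ol{q_m(z)}=0$ is \emph{not} $\CC$-linear in $(p_n(z),q_m(z))$ --- it couples $q_m(z)$ with its conjugate --- so conditioning does \emph{not} preserve propriety of the pair $(p_n'(z),q_m'(z))$, and the pseudo-covariances must be carried through the whole computation. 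Performing the (real) Gaussian regression of $(p_n'(z),q_m'(z))$ onto $\bigl(\re H(z),\im H(z)\bigr)$, one finds that conditionally on $\{H(z)=0\}$ each of $p_n'(z),q_m'(z)$ is individually proper, with
\[
\EE\bigl[\lvert p_n'(z)\rvert^2 \,\big|\, H(z)=0\bigr]=\frac{r_1}{\lvert z\rvert^2 r_3},\qquad \EE\bigl[\lvert q_m'(z)\rvert^2 \,\big|\, H(z)=0\bigr]=\frac{r_2}{\lvert z\rvert^2 r_3},\qquad \EE\bigl[p_n'(z)\ol{q_m'(z)} \,\big|\, H(z)=0\bigr]=0,
\]
but with a surviving conditional \emph{pseudo}-covariance $\EE\bigl[p_n'(z)q_m'(z) \,\big|\, H(z)=0\bigr] = -\,r_{12}/(\lvert z\rvert^2 r_3)$. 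This last term is exactly the source of $r_{12}$ in \eqref{eq:LiWei}; the factors $\lvert z\rvert^{-2}$ come from the shift of exponent in $p_n',q_m'$, and the normalizer $r_3$ (rather than $\sum_{j=0}^n\lvert z\rvert^{2j}$) appears because one conditions on $H(z)$, whose variance is $r_3$.

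It remains to evaluate $\EE\bigl[\, \bigl\lvert \lvert p_n'\rvert^2-\lvert q_m'\rvert^2 \bigr\rvert \ \big|\ H=0\,\bigr]$ for this improper Gaussian pair. Splitting into real and imaginary parts, the conditional pseudo-covariance structure makes the pairs $(\re p_n',\re q_m')$ and $(\im p_n',\im q_m')$ independent, each a $2$-dimensional centered Gaussian, so that $\lvert p_n'\rvert^2-\lvert q_m'\rvert^2$ has the law of $\nu_+ G_+ + \nu_- G_-$, where $G_+,G_-$ are independent $\chi^2_2$ random variables and $\nu_+>0>\nu_-$ are the eigenvalues of a fixed $2\times2$ symmetric matrix assembled from $r_1,r_2,r_{12},r_3$. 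Using the elementary identity $\EE\lvert E_1-E_2\rvert = (\lambda_1^2+\lambda_2^2)/(\lambda_1+\lambda_2)$ for independent exponential variables $E_1,E_2$ of means $\lambda_1,\lambda_2$, the eigenvalue difference $\nu_+-\nu_-$ produces (after simplifying $(r_1-r_2)^2 + 4(r_1 r_2 - r_{12}^2)$) the radical $\sqrt{(r_1+r_2)^2-4r_{12}^2}$ in the denominator, while $\nu_+^2+\nu_-^2$ produces the numerator $r_1^2+r_2^2-2r_{12}^2$; this gives
\[
\EE\bigl[\, \lvert \det DF(z)\rvert \ \big|\ H(z)=0\,\bigr] = \frac{1}{\lvert z\rvert^2 r_3}\cdot\frac{r_1^2+r_2^2-2r_{12}^2}{\sqrt{(r_1+r_2)^2-4r_{12}^2}},
\]
and multiplying by $p_{F(z)}(0)=1/(\pi r_3)$ reproduces the integrand of \eqref{eq:LiWei}. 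A closing remark would observe that $r_1,r_2 = O(\lvert z\rvert^2)$ as $z\to0$, so the apparent $\lvert z\rvert^{-2}$ singularity is removable and the integrand extends continuously to all of $\CC$; and, as a consistency check, setting $q_m\equiv0$ (so $r_2=r_{12}=0$) collapses the integrand to the classical Edelman--Kostlan density $\tfrac1\pi\,\partial_z\partial_{\bar z}\log\bigl(\sum_{j=0}^n\lvert z\rvert^{2j}\bigr)$ of the analytic Kac polynomial, which integrates to $n$.
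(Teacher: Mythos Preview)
Your argument is correct in substance and reaches the right integrand, but it proceeds differently from the paper and contains one small slip worth flagging.

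The paper sidesteps the improper--Gaussian issue entirely by conditioning the pair $\bigl(zp_n'(z),\,\ol{zq_m'(z)}\bigr)$ rather than $\bigl(p_n'(z),q_m'(z)\bigr)$: since $\ol{q_m(z)}$ enters $H$ linearly, this pair together with $H(z)$ is jointly \emph{proper}, so ordinary complex regression applies and yields directly the real conditional covariance matrix $\tfrac{1}{r_3}\bigl(\begin{smallmatrix}r_1&-r_{12}\\-r_{12}&r_2\end{smallmatrix}\bigr)$. From there the paper quotes Turin's formula for the characteristic function of a Hermitian form $V^*QV$ in proper Gaussians, Fourier--inverts to get the density of $|zp_n'|^2-|zq_m'|^2$, and integrates against $|y|$. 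Your route---real regression carrying the pseudo-covariance, diagonalization, and the elementary identity $\EE|E_1-E_2|=(\mu_1^2+\mu_2^2)/(\mu_1+\mu_2)$ for independent exponentials---is more elementary in that it avoids characteristic functions altogether; the two computations meet at the point where the Jacobian is recognized as a difference of independent exponential random variables.

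The slip: the conditional pseudo-covariance is $\EE\bigl[p_n'(z)q_m'(z)\mid H(z)=0\bigr]=-r_{12}/(z^{2}r_3)$, not $-r_{12}/(|z|^{2}r_3)$. (Project onto $\operatorname{span}_{\RR}\{H,\bar H\}$: since $\EE[p_n'\bar H]=s_n/z$, $\EE[p_n'H]=0$, $\EE[q_m'\bar H]=0$, $\EE[q_m'H]=s_m/z$ with $s_k=\sum j|z|^{2j}$, one gets $\hat p_n'=\tfrac{s_n}{zr_3}H$, $\hat q_m'=\tfrac{s_m}{zr_3}\bar H$, hence $\EE[\hat p_n'\hat q_m']=r_{12}/(z^{2}r_3)$.) Because this is generically non-real, the pairs $(\re p_n',\re q_m')$ and $(\im p_n',\im q_m')$ are \emph{not} independent as you claim. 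The repair is harmless: replace $q_m'$ by $(z/\bar z)\,q_m'$, which leaves $|q_m'|^2$ unchanged and makes the pseudo-covariance equal to the real number $-r_{12}/(|z|^2 r_3)$; your block-independence and eigenvalue computation then go through verbatim. This rotation is precisely the simplification that the paper's choice of $\ol{zq_m'(z)}$ buys for free.
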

This result is proved using a version of the Kac-Rice formula for vector fields which gives
\begin{equation}\label{eq:KacRiceVec}
\mathbb{E}N_H(T)=\int_T\mathbb{E}\left(\lvert\det J_H(z)\rvert\big\vert H(z)=0\right)\rho(0;z)dA(z),
\end{equation}
where, for each $z$, $\rho(u;z)$ is the probability density function of $u=H(z)$.
Theorem \ref{thm:LiWei} then follows from an explicit computation of the expectation appearing inside the integral on the right hand side in \eqref{eq:KacRiceVec}.  We include a modified version of Li and Wei's proof in Section \ref{sec:prelim} below.

The asymptotic analysis of \eqref{eq:LiWei} was left as an open problem (as we discuss in Section \ref{sec:models} below, the authors of \cite{LiWei} did obtain precise asymptotics for a different model referred to as the Kostlan model).  For the case when $m$ is fixed and $n \rightarrow \infty$, Li and Wei conjectured that the average number of zeros for the Kac model is asymptotically $n$.  This was confirmed by the second named author in his thesis \cite{AndyThesis}
where it was further conjectured that when $m \sim n$ as $n \rightarrow \infty$ the expected number of zeros satisfies $\EE N_H \sim C n \log n$ for some constant $C>0$.

\begin{conj}[\cite{AndyThesis}]
With $p_n,q_n$ sampled independently from the Kac ensemble, the expected number of zeros of $H(z) = p_n(z) + \ol{q_n(z)}$ satisfies $\EE N_H \sim C n \log n$ as $n \rightarrow \infty$ for some constant $C>0$.
\end{conj}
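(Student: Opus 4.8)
The plan is to perform a direct asymptotic analysis of the integral in \eqref{eq:LiWei} specialized to $m = n$, and to locate where its mass concentrates. First I would exploit the rotational symmetry of the integrand, writing $dA(z) = r\,dr\,d\theta$ with $r = |z|$, so that $\EE N_H$ becomes $2\int_0^\infty f_n(r)\,dr$ for an explicit rational function $f_n$ built from the partial sums $S_j(r) := \sum_{k} k^j r^{2k}$. With $m = n$ we have $r_1 = r_2$, which collapses the square root in the denominator to $2\sqrt{r_1^2 - r_{12}^2}$ and simplifies the numerator to $2(r_1^2 - r_{12}^2)$; a cancellation then leaves something like $f_n(r) \sim \tfrac{1}{\pi r}\sqrt{r_1^2 - r_{12}^2}/r_3^2$. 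The factor $\sqrt{r_1^2 - r_{12}^2}$ factors as $\sqrt{(r_1 - r_{12})(r_1 + r_{12})}$, and since $r_1 = r_3 S_2 - S_1^2$ while $r_{12} = S_1^2$ (after setting $m=n$), we get $r_1 - r_{12} = r_3 S_2 - 2S_1^2$ and $r_1 + r_{12} = r_3 S_2$. So the task reduces to understanding $\sqrt{S_2(r_3 S_2 - 2S_1^2)}/r_3^{3/2}$ as a function of $r$, integrated in $r$.

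Next I would split the radial integral into three regimes and estimate each. For $r$ bounded away from $1$ on the inside ($r \le 1 - \delta_n$ for a suitable $\delta_n \to 0$), the partial sums $S_j(r)$ are essentially the convergent series $\sum_k k^j r^{2k}$, comparable to $(1-r^2)^{-(j+1)}$, and one checks the integrand is $O(1/r)$ with no logarithm, contributing $O(n)$ or even $O(1)$ depending on how $\delta_n$ is chosen. By the reciprocal symmetry $z \mapsto 1/\bar z$ of the Kac ensemble (which sends $r \mapsto 1/r$ up to degree bookkeeping), the region $r \ge 1 + \delta_n$ contributes comparably, another $O(n)$. The dominant contribution comes from the transition window $|r - 1| \lesssim 1/n$: there the partial sums behave like $S_0 \approx n$, $S_1 \approx n^2/2$, $S_2 \approx n^3/3$ up to universal constants, so $r_3 \approx 2n$, and the combination $r_3 S_2 - 2 S_1^2 \approx 2n \cdot n^3/3 - 2 n^4/4 = n^4/6$ — crucially a nonzero leading term — giving an integrand of size roughly $\sqrt{(n^3)(n^4)}/(2n)^{3/2} \cdot \tfrac1r \approx c\, n^{2}$ over a window of width $\sim 1/n$, hence a contribution of order $n$, not $n\log n$. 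That discrepancy tells me the logarithm must instead emerge from a wider intermediate range, say $1/n \ll |1 - r| \ll 1$, where $S_j(r) \approx j!\,(1-r^2)^{-(j+1)}$ but the cross term $r_3 S_2 - 2S_1^2$ suffers a near-cancellation of its leading $(1-r^2)^{-4}$ parts, dropping the integrand to size $\sim \tfrac{1}{r}\cdot\tfrac{1}{1-r^2}$, whose integral over $1/n \ll 1-r \ll 1$ produces exactly a $\log n$. Carefully tracking the constant through this cancellation is what yields $C = \tfrac12$ (counting both the $r<1$ and $r>1$ sides).

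The technical heart, therefore, is a careful two-sided Laplace-type (or Euler–Maclaurin) expansion of the truncated sums $S_j(r)$ that is uniform across the crossover scale $|1-r| \asymp 1/n$ and matches smoothly onto the ``bulk'' expansions $j!(1-r^2)^{-(j+1)}$ on one side and the finite-sum asymptotics $S_j(1) \asymp n^{j+1}/(j+1)$ on the other. I would introduce the scaling variable $t = n(1 - r^2)$ (or $t = n\log(1/r^2)$) and express $S_j(r) = n^{j+1} g_j(t)(1 + o(1))$ for explicit smooth profile functions $g_j$ given by incomplete-gamma-type integrals $\int_0^1 s^j e^{-ts}\,ds$; then the integrand, rewritten in $t$, becomes $n$ times a fixed function $\Phi(t)$ plus errors, and $\int \Phi(t)\,dt$ over $t \in (0,\infty)$ diverges logarithmically at $t = \infty$ — the divergence being cut off at $t \asymp n$ (i.e. $r$ bounded away from $1$), which is precisely the mechanism generating $\log n$.

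The main obstacle I anticipate is controlling the near-cancellation in $r_3 S_2 - 2 S_1^2$ with enough precision: both terms are of order $n^{j}/(1-r^2)^{4}$-type in the intermediate regime, so one needs the \emph{second}-order terms in the expansions of $S_1$ and $S_2$ (not just leading order) to see the true size of the difference, and one must verify the surviving term has a sign that keeps the expression under the square root nonnegative throughout. A secondary difficulty is making all the error estimates uniform across the three glued regimes so that the pieces combine without losing the constant; I would handle this by proving a single uniform asymptotic for $S_j(r)$ valid for all $r \in (0,1]$ with an explicit error term, deferring the $r > 1$ range to the reciprocal symmetry of the Kac integrand and a change of variables $r \mapsto 1/r$.
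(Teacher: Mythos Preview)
Your overall strategy---reduce to a radial integral, simplify the $m=n$ case to $\sqrt{S_2(r_3S_2-2S_1^2)}/r_3^{3/2}$, and hunt for the range that produces the $\log n$---is the same as the paper's. But there is a genuine gap in the plan that would cause the argument to fail.

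The error is the appeal to ``reciprocal symmetry $z\mapsto 1/\bar z$ of the Kac ensemble.'' That symmetry holds for the \emph{analytic} Kac polynomial but \emph{not} for the harmonic polynomial $p_n(z)+\overline{q_n(z)}$; the paper discusses exactly this point in Section~1.2, and Theorem~\ref{thm:UV} shows the two sides of the unit circle behave completely differently (order~$n$ versus order~$1$ on compact sets). Concretely, the factor $\sqrt{c_n/a_n}$ in \eqref{eq:Fn} is asymmetric: for $w<1$ with $1-w\gg 1/n$ the partial sums converge and $\sqrt{c_n/a_n}\sim \sqrt{w(1+w)}/(1-w)$, giving $F_n(w)\sim \tfrac12\sqrt{1+w}/(1-w)^2$ with no $n$-dependence at all (so $\int_{1-\varepsilon}^{1-c/n}F_n\,dw=O(n)$); whereas for $w>1$ with $w-1\gg (\log n)^2/n$ the highest-degree terms dominate and $\sqrt{c_n/a_n}\sim n$, giving $F_n(w)\sim \tfrac{n}{2\sqrt{w}(w-1)}$ and hence the $\tfrac12 n\log n$ from $\int_{1+(\log n)^2/n}^{1+1/\log n}\tfrac{n}{2(w-1)}\,dw$. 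So the logarithm comes \emph{entirely} from $r>1$, not symmetrically from both sides, and your plan to ``defer the $r>1$ range to the reciprocal symmetry'' would miss the whole leading term.

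Relatedly, your ``near-cancellation'' diagnosis for $r_3S_2-2S_1^2$ is off: for $w<1$ one has $r_3S_2\sim 2w(1+w)/(1-w)^4$ and $2S_1^2\sim 2w^2/(1-w)^4$, whose difference $2w/(1-w)^4$ is the same order---no cancellation. The paper instead uses the closed-form identity \eqref{eq:keyidentity}, $\dfrac{a_nc_n-b_n^2}{wa_n^2}=\dfrac{1}{(w-1)^2}\Bigl(1-\dfrac{(n+1)^2w^n}{a_n^2}\Bigr)$, which cleanly isolates the $(w-1)^{-2}$ behavior on both sides and reduces the asymmetry question to the single factor $\sqrt{c_n/a_n}$. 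If you replace the symmetry step with a direct analysis of $c_n/a_n$ for $w>1$ (it is $\sim n^2$ once $w>1+(\log n)^2/n$, via \eqref{eq:cn}--\eqref{eq:an}), your scaling-variable approach would go through, and the incomplete-gamma profile machinery becomes unnecessary for the leading term.
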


Our main result, stated in the following theorem, provides the precise asymptotic confirming this conjecture while also giving an explicit value of the constant $C=1/2$.

\begin{thm}\label{thm:iid}
Let $H(z) = p_n(z) + \ol{q_n(z)}$ be a random harmonic polynomial with i.i.d.\@ Gaussian coefficients.
Then the expectation of the number of zeros of $H$ satisfies
$$\EE N_{H}(\CC) \sim  \frac{1}{2}  n \log n, \quad \text{as } n \rightarrow \infty.$$
\end{thm}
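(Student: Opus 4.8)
The plan is to extract the $\frac{1}{2}n\log n$ asymptotic directly from the Li--Wei integral formula \eqref{eq:LiWei} with $m=n$, by carefully estimating the integrand in the three natural regions of the plane: the unit disk $|z|<1$, a transitional annulus around $|z|=1$, and the exterior $|z|>1$. First I would set $t=|z|^2$ and write the various partial theta-type sums $\sigma_j(t)=\sum_{k=0}^n k^j t^k$ appearing in $r_1,r_2,r_3,r_{12}$ in closed-form-friendly terms; since $m=n$ we have $r_1=r_2$, so the integrand simplifies to $\frac{1}{\pi|z|^2}\cdot\frac{2(r_1^2-r_{12}^2)}{r_3^2\sqrt{(2r_1)^2-4r_{12}^2}} = \frac{1}{\pi|z|^2}\cdot\frac{\sqrt{r_1^2-r_{12}^2}}{r_3^2}\cdot(r_1+r_{12})\cdot\frac{1}{r_1}$ after factoring — more precisely I would reduce it to an expression of the form $\frac{1}{\pi|z|^2}\cdot\frac{1}{r_3^2}\sqrt{(r_1-r_{12})(r_1+r_{12})}$ times a bounded factor, and pass to polar coordinates so that $\mathbb{E}N_H(\CC)=2\int_0^\infty (\cdots)\,r\,dr$ becomes a one-dimensional integral in $t$.

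The main contribution should come from the region $|z|<1$. There, as $n\to\infty$, the sums $\sigma_j(t)$ converge (for fixed $t<1$) to the full series $\sum_{k\ge 0}k^j t^k$, which are rational functions of $t$ — e.g. $r_3\to \frac{2}{1-t}$. One checks that the limiting integrand, after the change of variables, behaves near $t=1$ like a constant times $\frac{1}{1-t}$, which produces a logarithmic divergence; cutting the $t$-integral off at $t=1-1/n$ (reflecting that $\sigma_j$ saturates near $|z|=1$ once $|z|^{2n}$ stops being negligible) yields $\int^{1-1/n}\frac{dt}{1-t}\sim \log n$, and tracking the constant carefully gives the coefficient $\tfrac12 n$ in front — the factor $n$ entering because the derivative sums $r_1,r_{12}$ carry a factor $n^2$ near the unit circle that must be reconciled against $r_3^2$. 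I would make this rigorous by a dominated-convergence / uniform-estimate argument: bound $\sigma_j(t)$ above and below by $\min(n^{j+1}, c_j/(1-t)^{j+1})$-type quantities, split at $t=1-1/n$, and show the contribution of $t<1-C/n$ matches $\tfrac12 n\log n(1+o(1))$ while the tail $1-C/n<t<1$ is $O(n)$.

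The exterior region $|z|>1$ is handled by the substitution $z\mapsto 1/\bar z$ (equivalently $t\mapsto 1/t$): $H(z)=z^n(\overline{q(1/\bar z)/z^{-n}}\cdot(\text{conj}) + p(z)/z^n)$, so up to the factor $z^n$ the zeros in $|z|>1$ correspond to zeros of a harmonic polynomial of the same type with $|z|<1$, and by the symmetry of the Li--Wei integrand under $t\leftrightarrow 1/t$ (which I would verify from the explicit formulas, up to the Jacobian $dA(z)/|z|^4$) the exterior contributes the same $\tfrac12 n\log n(1+o(1))$... except that this would double the answer, so in fact the correct reading is that the integrand is \emph{not} symmetric and the exterior contributes only $O(n)$; I would pin down which alternative holds by direct estimation of $\sigma_j(1/t)$ for $t<1$, where now all sums are dominated by their top term $n^j t^{-n}$ and $r_1, r_1^2-r_{12}^2$ collapse in size, making the exterior integrand $O(1/(|z|^2\log^2|z|))$-integrable with total mass $O(n)$. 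Finally the annulus $|1-|z||<C/n$ has area $O(1/n)$ and integrand $O(n^2/\text{something})$; a crude bound there gives $O(n\log n)$ at worst, which I would need to sharpen to $o(n\log n)$ or $O(n)$ by a more careful local analysis near $t=1$ using the Euler--Maclaurin approximation $\sigma_j(1-s/n)\approx n^{j+1}\Gamma(j+1)s^{-(j+1)}\gamma(\cdots)$ in the scaling variable $s$.

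The hard part will be the transitional regime $|z|\approx 1$: there the partial sums are genuinely neither their limits nor their top terms, and both the numerator $r_1^2-r_{12}^2$ and denominator $r_3^2$ are comparable to large powers of $n$, so naive bounds lose the logarithm. I expect to need a uniform estimate for $\sigma_j(t)$ valid across $1-C\log n/n \le t \le 1$, via Euler--Maclaurin or a Laplace-type analysis of $\sum k^j e^{-k(1-t)}$, showing that the dangerous cancellation in $r_1r_3 - (\text{square of linear sum})$ is of exactly the right order to keep the integrand $\asymp \frac{1}{1-t}$ down to scale $1-t\sim 1/n$ and no finer — this is what localizes the $\log n$ and fixes the constant $\tfrac12$. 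Establishing that two-sided estimate cleanly, and confirming the exterior is lower-order, are the two steps where the real work lies; everything else is bookkeeping with the explicit formula \eqref{eq:LiWei}.
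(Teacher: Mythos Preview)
Your proposal has a genuine gap: you locate the main contribution in the wrong region. You assert that ``the main contribution should come from the region $|z|<1$,'' but in fact the interior of the unit disk contributes only $O(1)$ on compact subsets and at most $O(n\log\log n)$ in total. The reason is visible in the factored form of the integrand: with $m=n$ the Kac--Rice density reduces (after the change $w=|z|^2$) to
\[
F_n(w)=\frac{1}{2\sqrt{w}}\sqrt{\frac{c_n}{a_n}}\sqrt{\frac{a_nc_n-b_n^2}{wa_n^2}},
\]
and for fixed $w<1$ the sums $a_n,b_n,c_n$ converge to their power-series limits, so $c_n/a_n\to w(1+w)/(1-w)^2$, which is \emph{bounded}---there is no factor of $n$ available. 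Your sentence ``the factor $n$ entering because the derivative sums $r_1,r_{12}$ carry a factor $n^2$ near the unit circle'' is exactly the mechanism, but it only operates where $w>1$: there the top term dominates and $c_n/a_n\sim n^2$. Hence the integrand is $\sim \frac{n}{2\sqrt{w}(w-1)}$ on the \emph{exterior} side of the circle, and integrating this over the thin annulus $1+\frac{(\log n)^2}{n}\le w\le 1+\frac{1}{\log n}$ produces the full $\frac{1}{2}n\log n$, while the remaining regions (the whole disk, the sliver $|w-1|\lesssim (\log n)^2/n$, and the far exterior) contribute only $O(n\log\log n)$.

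Relatedly, your attempted symmetry argument via $z\mapsto 1/\bar z$ is a red herring: as the paper discusses explicitly, that substitution does not preserve harmonicity and introduces oscillatory factors $e^{\pm in\theta}$, so there is no exchange of interior and exterior contributions---the asymmetry is real and is the whole point. Your hedged alternative (``the exterior contributes only $O(n)$'') is also wrong for the opposite reason: the exterior annulus near the circle carries the entire leading term. To repair the argument you should (i) prove $\sqrt{c_n/a_n}=n(1+o(1))$ uniformly on $1+\frac{(\log n)^2}{n}\le w\le 2$ using the closed forms for $a_n,c_n$, (ii) use the identity $\frac{a_nc_n-b_n^2}{wa_n^2}=\frac{1}{(w-1)^2}\bigl(1-\frac{(n+1)^2w^n}{a_n^2}\bigr)$ to get the $1/(w-1)$ factor there, and (iii) bound the interior by noting $\sqrt{c_n/a_n}\le \frac{2n}{\log n}$ on $[0,1-\frac{\log n}{n}]$ via monotonicity of $c_n/a_n$, which kills the would-be logarithm inside the disk.
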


The proof of this result, which is based on asymptotic analysis of Li and Wei's Kac-Rice type integral \eqref{eq:LiWei}, gives additional information on the average number of zeros appearing in various regions of the complex plane.  Namely, Theorem \ref{thm:iid} is a consequence of the following more refined result.

\begin{thm}
\label{thm:refined}
Let $H(z) = p_n(z) + \ol{q_n(z)}$ be a random harmonic polynomial with i.i.d.\@ Gaussian coefficients.
Then the expectation of the number of zeros of $H$ satisfies the following asymptotic estimates as $n \rightarrow \infty$
\be
\EE N_{H}(\{|z|^2 \leq 1+\tfrac{(\log n)^2}{n} \} )  = O(n \log \log n),
\ee
\be
\EE N_{H}(\{1+\tfrac{(\log n)^2}{n}  \leq |z|^2 \leq 1+ (\log n)^{-1} \} )  \sim \frac{1}{2} n \log n,
\ee
\be
\EE N_{H}(\{|z|^2 \geq 1+ (\log n)^{-1} \} )  = O(n \log \log n).
\ee
\end{thm}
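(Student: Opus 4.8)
The plan is to carry out a direct asymptotic analysis of Li and Wei's Kac--Rice integral \eqref{eq:LiWei}, exploiting the fact that its integrand is radial. Substituting $x=|z|^2$ (so $dA(z)=\pi\,dx$) collapses \eqref{eq:LiWei} to a one-dimensional integral. Writing $S_k=S_k(x)=\sum_{j=0}^{n}j^{k}x^{j}$ for $k=0,1,2$, we have $r_3=2S_0$, $r_1=r_2=2S_0S_2-S_1^{2}$, and $r_{12}=S_1^{2}$ when $m=n$; since the combinations $r_1^2+r_2^2-2r_{12}^2$ and $(r_1+r_2)^2-4r_{12}^2$ both reduce to multiples of $S_0S_2(S_0S_2-S_1^{2})$, the integrand simplifies and one obtains
\begin{equation*}
\EE N_H\big(\{\,|z|^{2}\in I\,\}\big)=\int_{I}\frac{\sqrt{\beta(\beta-\alpha^{2})}}{2x}\,dx,\qquad \alpha=\frac{S_1}{S_0},\quad \beta=\frac{S_2}{S_0}.
\end{equation*}
Here $\alpha=\mu(x)$ and $\beta-\alpha^{2}=\sigma^{2}(x)$ are the mean and variance of the probability distribution on $\{0,1,\dots,n\}$ assigning mass proportional to $x^{j}$, so the integrand is $\frac{1}{2x}\,\sigma\sqrt{\mu^{2}+\sigma^{2}}$. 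The main term $\tfrac12 n\log n$ will come entirely from the middle annulus, where the integrand is $\asymp \frac{n}{2(x-1)}$ and is integrated against $d\log(x-1)$ over a range of logarithmic length $\sim\log n$.

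The second step is to describe $\mu$ and $\sigma^{2}$ in the three relevant regimes. (i) For $x$ bounded away from $1$ with $x<1$, the $S_k$ differ from the convergent series $\sum_{j\ge0}j^{k}x^{j}$ by $O(x^{n})$, so with $t=1-x$ one gets $\mu\sim t^{-1}$, $\sigma^{2}\sim t^{-2}$, and integrand $\sim\frac{\sqrt{2-t}}{2t^{2}}$; in particular the integrand stays bounded as $x\to 0$. (ii) For $x>1$ with $n(x-1)\to\infty$, reflecting $j\mapsto n-i$ gives $S_k=x^{n}\sum_{i=0}^{n}(n-i)^{k}x^{-i}$, which up to relative error $O(x^{-n})$ is $x^{n}$ times the convergent series in $x^{-1}$; this yields, with $s=x-1$,
\begin{equation*}
\mu=n-\tfrac1s,\qquad \sigma^{2}=\tfrac{1+s}{s^{2}},\qquad\text{hence}\qquad \text{integrand}=\frac{1}{2s\sqrt{1+s}}\sqrt{\Big(n-\tfrac1s\Big)^{2}+\tfrac{1+s}{s^{2}}}
\end{equation*}
up to a factor $1+O(x^{-n})$. (iii) In the transition window $|x-1|=O(1/n)$ none of this simplifies, but there one needs only the crude bounds $0\le\mu\le n$ and $0\le\sigma^{2}\le n^{2}/4$ (Popoviciu's inequality, the distribution being supported on $\{0,\dots,n\}$), so the integrand is $O(n^{2})$ on that window.

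The third step is the region-by-region integration. Over the middle annulus $(\log n)^{2}/n\le s\le 1/\log n$ one has $ns\to\infty$ (so the factor $1+O(x^{-n})$ in (ii) is uniformly $1+o(1)$), and also $1/s=o(n)$, $\sqrt{1+s}=1+o(1)$, $(1+s)/s^{2}=o(n^{2})$ uniformly; hence the integrand equals $\frac{n}{2s}\,(1+o(1))$ uniformly and
\begin{equation*}
\int_{(\log n)^{2}/n}^{1/\log n}\frac{n}{2s}\,\big(1+o(1)\big)\,ds=\big(\tfrac12+o(1)\big)\,n\log\!\Big(\frac{n}{(\log n)^{3}}\Big)\sim\tfrac12 n\log n,
\end{equation*}
the middle estimate. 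For the inner region $\{x\le 1+(\log n)^{2}/n\}$, split off the window $|x-1|\le K/n$ for a large constant $K$ (crude bounds give $O(n^{2})\cdot O(1/n)=O(n)$); on the sub-critical part $t=1-x\ge K/n$, estimate (i) gives $\int\frac{\sqrt{2-t}}{2t^{2}}\,dt=O(n/K)=O(n)$; and on the remaining sliver $K/n\le s\le(\log n)^{2}/n$, estimate (ii) bounds the integrand by $n/s$, so $\int\le n\log\!\big((\log n)^{2}/K\big)=O(n\log\log n)$. Hence the inner region is $O(n\log\log n)$. For the outer region $s\ge 1/\log n$: on $1/\log n\le s\le 1$ one has $ns\to\infty$ and the integrand is at most $n/s$, giving $\int\le n\log\log n$; on $s\ge1$ the integrand is $\asymp n/s^{3/2}$, which is integrable and contributes $O(n)$. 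So the outer region is $O(n\log\log n)$. Adding the three estimates gives Theorem \ref{thm:refined}, and summing them gives Theorem \ref{thm:iid}.

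The hard part is the uniform error control in step (ii) near the two ends of the middle annulus. At the left end $s\approx(\log n)^{2}/n$ one has $1/s\approx n/(\log n)^{2}$, which beats $n$ only by the factor $(\log n)^{-2}$, so one must track that $\mu=n-1/s\sim n$, that $(n-1/s)^{2}$ dominates $(1+s)/s^{2}$, and that $(1+s)/s^{2}=o(n^{2})$, all uniformly --- this is precisely why the annulus is cut at $(\log n)^{2}/n$ rather than $1/n$: the clean edge asymptotic $n/(2s)$ degrades once $n(x-1)$ is $O(1)$, and the lumped-in sliver $[K/n,(\log n)^{2}/n]$, where only the cruder bound $n/s$ survives, is exactly what produces the $O(n\log\log n)$ remainder. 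One must also check that the crude bounds, integrated over the $O(1/n)$-window, contribute no more than $O(n)$, and confirm the algebraic collapse of the Li--Wei integrand asserted in the first step for $m=n$. None of this is deep, but the bookkeeping must be precise enough to pin down the constant $\tfrac12$.
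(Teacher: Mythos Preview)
Your proof is correct and follows the same overall strategy as the paper: collapse Li and Wei's integral to a one-dimensional radial integral, subdivide the $x$-axis, and estimate on each piece, with the main term $\tfrac12 n\log n$ coming from the middle annulus where the integrand is uniformly $\sim n/(2s)$. The execution differs in a few respects worth recording. Your recognition of $S_1/S_0$ and $S_2/S_0-(S_1/S_0)^2$ as the mean $\mu$ and variance $\sigma^2$ of the distribution $\propto x^j$ on $\{0,\dots,n\}$, together with Popoviciu's inequality for the crude bound $\sigma^2\le n^2/4$ on the transition window, is cleaner than the paper's direct manipulations and lets you avoid the algebraic identity $\frac{a_nc_n-b_n^2}{wa_n^2}=\frac{1}{(w-1)^2}\bigl(1-\tfrac{(n+1)^2w^n}{a_n^2}\bigr)$ on which the paper relies. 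On the sub-critical side $x<1$ the paper takes a different route---proving monotonicity of $c_n/a_n$ and using the cutoff $1-\tfrac{\log n}{n}$ to get $\sqrt{c_n/a_n}\le 2n/\log n$---whereas you go directly to the limiting integrand $\tfrac{\sqrt{2-t}}{2t^2}$ with a fixed-$K$ window; your decomposition is simpler. One point to tighten: you state estimate~(i) only ``for $x$ bounded away from $1$'' but then apply it down to $t=K/n$. This is fine---the relative error in replacing $S_k$ by its full series is $O(K^k e^{-K})$ uniformly on $t\ge K/n$, or alternatively one can check that $\tfrac{\sqrt{2-t}}{2t^2}$ is in fact a global upper bound for the integrand on $0<x<1$ (since truncation to $\{j\le n\}$ can only decrease both the second moment $\beta$ and, via the identity above, the variance $\sigma^2$)---but the uniformity deserves an explicit sentence.
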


As a particular consequence of Theorem \ref{thm:refined}, we note that the entire leading order contribution to the average number of zeros comes from an annulus of shrinking width located in the exterior of the unit disk and converging to the unit circle.
We compare and contrast this outcome with the well-studied case of analytic polynomials in the next subsection.

\subsection{Comparison with zeros of random analytic polynomials}

The zeros of random analytic polynomials with i.i.d.\@ coefficients have been studied extensively \cite{Shepp}, \cite{KabZap}, \cite{KoushikIgor}, \cite{Zelditch}, \cite{Zeitouni}; while the total number of zeros in the analytic case is determined by the degree, it is interesting to study the location of zeros which are known to approximately equidistribute on the unit circle.  To be precise, the empirical measure (normalized counting measure) associated to the zero set converges (as the degree increases) weakly almost surely to the uniform measure on the unit circle (this result has been extended beyond the Gaussian setting under a mild tail decay assumption on the distribution from which the coefficients are sampled \cite{KoushikIgor}).
It seems likely that such an equidistribution result also holds for the harmonic case considered above, and this would be consistent with the statement in Theorem \ref{thm:refined}.

From another aspect of the statement of Theorem \ref{thm:refined} we notice a sharp contrast between the distribution of zeros for the harmonic and analytic cases.
Namely, from the statement of Theorem \ref{thm:refined} we see that the expected number of zeros inside the unit disk represents an asymptotically vanishing portion of the zeros. 
In the analytic case, the expected number of zeros inside the unit disk is equal to the expected number of zeros in the exterior of the unit disk. This symmetry for the analytic case is easy to see by starting with the given analytic Kac polynomial $f(z)$ and considering the equivalent (in distribution) transformed polynomial $z^{n} f(z^{-1})$.  This transformation does not preserve harmonic Kac polynomials as it does not even preserve harmonicity.
Instead we have an appearance of highly oscillatory factors as we explain below.  Rather than $z^n$, let us multiply outside by $r^n$ where $z=r e^{i \theta}$ as this purely radial factor will give equal preference to the analytic and anti-analytic terms.
Let us denote $G(z):=r^n H(z^{-1}) = r^n p(z^{-1}) + r^n \ol{q(z^{-1})}$.
We have $r^n p(z^{-1}) = e^{-i n \theta}\hat{p}(z)$, where $\hat{p}$ is distributed as a Kac polynomial, and we have $r^n \ol{q(z^{-1})} = e^{i n \theta} \ol{\hat{q}(z)}$ is the complex conjugate of a Kac polynomial $\hat{q}$ multiplied by $e^{i n\theta}$, where $z=r e^{i \theta}$, i.e., $\theta$ denotes the argument of $z$.
As the degree $n$ tends to infinity, the Kac polynomials $p$, $q$, $\hat{p}$, $\hat{q}$ each tend toward Gaussian power series that converge almost surely in the unit disk, and in this sense they ``stabilize'' in the unit disk (there is of course still randomness in the limiting series, but no longer dependence on degree).  On the other hand, rather than stabilizing the function $G(z) = e^{-i n\theta} \hat{p}(z) + e^{i n \theta} \ol{\hat{q}(z)}$ has the highly oscillatory (deterministic) factors $e^{\pm i n\theta}$, potentially leading to many more zeros in the unit disk on average than $H(z)$.  (Recall that zeros of $G(z)$ inside the unit disk correspond to zeros of $H(z)$ outside the unit disk.)

The above heuristic gives some insight as to why there are more zeros of $H$ on average in the exterior of the disk than in the interior, and it seems particularly useful in understanding the behavior of $G$ (and thereby that of $H$) away from the unit circle.
Let us fix a radius $r_0<1$, and consider the increment of the argument of $G(z)$ along the circle of radius $r_0$.
When $n$ is large we estimate that the increment of the argument of $G(z)$ is approximately $-(m_+ - m_-)n = (2\pi -2m_+)n$ where $m_+$ denotes the arc length measure of the subset where $|p(z)|>|q(z)|$ and $m_-$ denotes the arclength measure of the subset where $|p(z)|<|q(z)|$.
The number $m_+$ is random, but ``stabilizes'' with $\hat{p}, \hat{q}$ as the degree $n$ tends to infinity.
Based on the generalized argument principle \cite[Thm. 2.2]{ST} for harmonic functions (while $G$ is not harmonic the increment of its argument is related to that of the harmonic function $H$ along the circle of reciprocal radius $1/r_0$), this suggests there are order-$n$ many zeros of $G$ inside the circle of radius $r_0$ or within an annulus of inner and outer radii less than one, and a corresponding statement holds for $H$ in an annulus with radii larger than one.

While the above argument is not rigorous, we confirm the predicted outcome in the following result concerning regions away from the unit circle, comparing the average number of zeros restricted to regions $U,V$ compactly contained in $|z|>1$ and $|z|<1$ respectively.


\begin{thm}\label{thm:UV}
For any open set $U$ whose closure is contained in the exterior of the closed unit disk, there exists a constant $C_U > 0$ such that
$$\EE N_H(U) \sim C_U \cdot n, \quad (n \rightarrow \infty). $$
For any open set $V$ whose closure is contained in the open unit disk, there exists a constant $C_V > 0$ such that
$$\EE N_H(V) \sim C_V, \quad (n \rightarrow \infty). $$
\end{thm}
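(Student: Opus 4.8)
The plan is to work directly from Li and Wei's formula \eqref{eq:LiWei}, which expresses $\EE N_H(T)$ as the integral over $T$ of a density depending (since $\deg p=\deg q=n$) only on $t=|z|^{2}$, and to carry out a pointwise asymptotic analysis of that density on regions bounded away from the unit circle, passing the limit inside the integral by dominated convergence. First I would record the simplification resulting from $m=n$. Writing $S_0=S_0(t)=\sum_{j=0}^{n}t^{j}$, $S_1=S_1(t)=\sum_{j=1}^{n}jt^{j}$ and $S_2=S_2(t)=\sum_{j=1}^{n}j^{2}t^{j}$, one has $r_1=r_2=2S_0S_2-S_1^{2}$, $r_{12}=S_1^{2}$ and $r_3=2S_0$, hence $r_1-r_{12}=2(S_0S_2-S_1^{2})$ and $r_1+r_{12}=2S_0S_2$, and the integrand in \eqref{eq:LiWei} collapses, giving
\be
\EE N_H(T)=\frac{1}{\pi}\int_{T}\frac{1}{|z|^{2}}\cdot\frac{\sqrt{(S_0S_2-S_1^{2})\,S_0S_2}}{2\,S_0^{2}}\,dA(z).
\ee
It is convenient to read this probabilistically: let $J=J_{n,t}$ be the integer random variable on $\{0,1,\dots,n\}$ with $\PP(J=j)\propto t^{j}$, and set $\mu_n(t)=\EE J$, $\sigma_n^{2}(t)=\mathrm{Var}\,J$. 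Then $S_1/S_0=\mu_n$, $(S_0S_2-S_1^{2})/S_0^{2}=\sigma_n^{2}$, and $S_2/S_0=\sigma_n^{2}+\mu_n^{2}$, so the density becomes $\rho_n(t):=\tfrac{1}{2t}\,\sigma_n(t)\sqrt{\sigma_n^{2}(t)+\mu_n^{2}(t)}$ and $\EE N_H(T)=\tfrac{1}{\pi}\int_T\rho_n(|z|^{2})\,dA(z)$.

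For the exterior statement, fix $t>1$. The weights $t^{j}/S_0$ favor $j$ near $n$, and in fact $\PP(n-J_{n,t}=k)\le t^{-k}$ for all $k$ and $n$, so $n-J_{n,t}$ converges in distribution and in every moment to the geometric law on $\{0,1,2,\dots\}$ with success probability $1-1/t$; consequently $\sigma_n^{2}(t)\to \tfrac{t}{(t-1)^{2}}$ and $\mu_n(t)=n-\tfrac{1}{t-1}+o(1)$. This gives the pointwise asymptotic $\rho_n(t)\sim \tfrac{n}{2\sqrt{t}\,(t-1)}$ for each fixed $t>1$. For the interchange of limit and integral I would use the elementary identity $S_0S_2-S_1^{2}=\tfrac12\sum_{i,j=0}^{n}(i-j)^{2}t^{i+j}$, which together with $S_0\ge t^{n}$ gives $\sigma_n^{2}(t)\le t^{3}/(t-1)^{4}$; combined with $\mu_n(t)\le n$ and $\sqrt{\sigma_n^{2}+\mu_n^{2}}\le\sigma_n+\mu_n$, this yields the $n$-free bound $\rho_n(t)/n\le g(t):=\tfrac{t^{2}}{2(t-1)^{4}}+\tfrac{\sqrt{t}}{2(t-1)^{2}}$ for every $n\ge1$. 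Since $\overline U\subset\{|z|>1\}$ we have $|z|^{2}\ge 1+\delta$ on $U$ for some $\delta>0$, and $g(|z|^{2})$ is then $dA$-integrable over $U$ (the integrand decays like $|z|^{-3}$ at infinity), so dominated convergence gives $\EE N_H(U)/n\to C_U:=\tfrac{1}{2\pi}\int_{U}\tfrac{dA(z)}{|z|\,(|z|^{2}-1)}$, which is finite and positive.

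For the interior statement, fix $t<1$. Now $\PP(J_{n,t}=j)\le t^{j}$, so $J_{n,t}$ itself converges in distribution and in every moment to the geometric law on $\{0,1,2,\dots\}$ with success probability $1-t$; hence $\mu_n(t)\to\tfrac{t}{1-t}$, $\sigma_n^{2}(t)\to\tfrac{t}{(1-t)^{2}}$, and $\rho_n(t)\to\rho_\infty(t):=\tfrac{\sqrt{1+t}}{2(1-t)^{2}}$, a function continuous and strictly positive on $[0,1)$ (with $\rho_\infty(0)=\tfrac12$). On any compact subset of $[0,1)$ the family $\{\rho_n\}$ is uniformly bounded (the sums $S_0,S_1,S_2$ are bounded with $S_0\ge1$, and $\sigma_n,\mu_n$ vanish at order $\sqrt{t}$, $t$ respectively near $t=0$, uniformly in $n$, which cancels the $1/t$), so bounded convergence applies. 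As $\overline V$ is a compact subset of the open unit disk, this yields $\EE N_H(V)\to C_V:=\tfrac{1}{\pi}\int_{V}\rho_\infty(|z|^{2})\,dA(z)$, with $C_V>0$ because $\rho_\infty$ is bounded below by a positive constant on the compact set $\{|z|^{2}:z\in\overline V\}$.

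I expect the exterior estimate to be the main obstacle. The leading terms of $S_0S_2$ and of $S_1^{2}$ coincide, so the true order of the density is governed by the \emph{variance} $\sigma_n^{2}$ of the tilted distribution rather than by its naive size; one must pin down this variance precisely through the geometric approximation, while simultaneously producing an $n$-free, $dA$-integrable envelope so that the limit can be moved inside the integral even when $U$ is unbounded. By contrast, the interior case is essentially routine once the density has been put in the form above, reducing to uniform control of elementary power sums on compact subsets of the disk.
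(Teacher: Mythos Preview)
Your proposal is correct and reaches the same explicit constants $C_U=\tfrac{1}{2\pi}\int_U\tfrac{dA(z)}{|z|(|z|^2-1)}$ and $C_V=\tfrac{1}{2\pi}\int_V\tfrac{\sqrt{1+|z|^2}}{(1-|z|^2)^2}\,dA(z)$ as the paper. Both arguments follow the same skeleton---compute the pointwise limit of the Li--Wei density away from the unit circle and pass the limit inside the integral via dominated (resp.\ bounded) convergence---but the implementations differ. The paper works directly with the closed forms of $a_n,b_n,c_n$ and the algebraic identity $\tfrac{a_nc_n-b_n^2}{wa_n^2}=\tfrac{1}{(w-1)^2}\bigl(1-\tfrac{(n+1)^2w^n}{a_n^2}\bigr)$, which immediately yields the sharp $n$-free envelope $F_n(w)/n\le\tfrac{1}{2\sqrt{w}\,|w-1|}$ used for dominated convergence on $U$, and invokes uniform convergence of Taylor polynomials on compacta of the disk for $V$. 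You instead recast the density as $\tfrac{1}{2t}\sigma_n\sqrt{\sigma_n^2+\mu_n^2}$ via the tilted law $\PP(J=j)\propto t^j$ and read off the asymptotics from convergence of $J$ (or $n-J$) to a geometric variable; your envelope $g(t)$, obtained from the variance identity $S_0S_2-S_1^2=\tfrac12\sum_{i,j}(i-j)^2t^{i+j}$, is less sharp near $t=1$ than the paper's, but since $\overline U$ is bounded away from the circle either suffices. The probabilistic framing has the conceptual advantage of making transparent that the $O(1)$ size of the variance $\sigma_n^2$ for fixed $t>1$ is exactly what drives the linear-in-$n$ growth, a fact that in the paper's treatment emerges only through the algebraic cancellation encoded in the identity above.
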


In contrast, for an analytic Kac polynomial the average number of zeros in either region $U$ or $V$ (as described in the theorem) approaches a constant (and it follows from the symmetry discussed above that for regions $U,V$ related by inversion with respect to the circle, the constant is the same).

It seems useful to devise a proof of Theorem \ref{thm:UV} based on the above intuitive geometric reasoning involving the oscillatory factors  $e^{\pm i n \theta}$. In the proof given below (in Section \ref{sec:main}) we use a different strategy returning to analysis of the integral \eqref{eq:LiWei} used in the proof of Theorem \ref{thm:refined}.  

\subsection{Other models of random harmonic polynomials}\label{sec:models}

Several other models of random harmonic polynomials, where the coefficients are independent (but not identically distributed) Gaussians, have been considered in \cite{LiWei},  \cite{Lerariotruncated}, \cite{Andy},  \cite{AndyThesis},  \cite{AndyZach}.  Let us briefly summarize those results.

In addition to considering the case of i.i.d.\@ coefficients, Li and Wei \cite{LiWei} considered the case when the coefficients have binomial variance, i.e.\@ $p$ and $q$ are independently sampled from the so-called Kostlan ensemble.
They showed that the expected number of zeros is asymptotically $n^{3/2}$ when $m \sim n$ and asymptotically $n$ when $m$ is fixed or when $m \sim \alpha n$ with $0 < \alpha < 1$.

With A. Lerario, the first author considered in \cite{Lerariotruncated} a modified version of the Kostlan model, which they referred to as the ``truncated model'', where the binomial variances for $q$ are taken to be $\binom{n}{k}$ rather than $\binom{m}{k}$ so that $q$ is an independent truncated copy of $p$.  This leads to asymptotically $c_\alpha n^{3/2}$ zeros on average when $m \sim \alpha n$ for $0< \alpha \leq 1$ where $\displaystyle c_\alpha = \tfrac{1}{2} \left[ \arctan \left( \sqrt{\alpha/(1-\alpha)} \right) - \sqrt{\alpha(1-\alpha)} \right]$
and asymptotically $n$ zeros on average when $m$ is fixed and $n \rightarrow \infty$.

With Z. Tyree, the second author considered in \cite{AndyZach} the case when $p$ and $q$ are each so-called Weyl polynomials (also referred to as ``flat'' polynomials).  They showed that the average number of zeros is again asymptotically proportional to $n^{3/2}$ when $m \sim \alpha n$ but with a different leading coefficient.  Namely, there are asymptotically $\frac{1}{3} m^{3/2} \sim \frac{\alpha^{3/2}}{3} n^{3/2}$ zeros on average.

In the work \cite{LundRand} mentioned above that established the existence of counterexamples to Wilmshurst's conjecture over a broad range of degrees $m,n$, the model was selected in an ad hoc way, sampling $q$ from the Kostlan ensemble and taking $p$ to be a perturbation of $q$.  A similar technique will be used in the current paper to prove Theorem \ref{thm:existence} stated below.

Fixing attention on $m \sim n$, the reader may notice that the Kac model of random harmonic polynomials that we focus on in this paper has fewer zeros on average than the other models that have been studied.  This may suggest that it is less suitable for gaining insight on Sheil-Small's extremal valence problem, but as we will see next in Section \ref{sec:constraint} it leads us to some insights on certain constrained versions of Sheil-Small's problem.

\subsection{Extremal valence under coefficient constraints}\label{sec:constraint}
Returning to the deterministic setting, and while motivated by classical extremal problems \cite{ErdelyiProblems} for analytic polynomials with ``Littlewood-type constraints'', let us consider the following restricted versions of Sheil-Small's problem.

\begin{prob}[unimodular polynomials]\label{prob:unimodular}
    For $n>m$, determine the maximum number of zeros of a harmonic polynomial $H_{n,m}(z) = p_n(z) + \ol{q_m(z)}$ having unimodular coefficients.
\end{prob}

\begin{prob}[Littlewood polynomials]\label{prob:pm1}
    For $n>m$, determine the maximum number of zeros of a harmonic polynomial $H_{n,m}(z) = p_n(z) + \ol{q_m(z)}$ where $p_n$ and $q_m$ are Littlewood polynomials, i.e., polynomials whose coefficients are $\pm 1$.
\end{prob}

Let us consider the case $m=n-1$ where the original (unconstrained) Sheil-Small problem is completely resolved. We note that Wilmshurst's extremal examples do not satisfy the unimodular constraint, and it does not seem possible to modify those examples to make them unimodular while preserving the number of zeros.  Hence, the Bezout bound may not be sharp in the restricted context of Problems \ref{prob:unimodular} and \ref{prob:pm1}, and these constrained versions of Sheil-Small's problem appear to be wide open.
The probabilistic perspective gives some insight here, namely, the following conjectured generalization of Theorem \ref{thm:iid} suggests that the maximal valence grows at least at the rate $n \log n$, which is confirmed for the unimodular case in Theorem \ref{thm:existence} below.

\begin{conj}\label{conj:iid}
The conclusion of Theorem \ref{thm:iid} holds when the coefficients are i.i.d.\@ random variables of mean zero, positive
 variance, and having finite moments.
\end{conj}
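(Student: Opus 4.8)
We outline a strategy toward Conjecture~\ref{conj:iid}. The plan is a universality argument: transfer the Gaussian asymptotics of Theorems~\ref{thm:iid} and~\ref{thm:refined} to a general i.i.d.\@ coefficient law $\xi$ (mean zero, variance $\sigma^2>0$, finite moments) by showing that the Kac--Rice integrand in~\eqref{eq:KacRiceVec} is, to leading order, insensitive to the law of $\xi$ on the region where the zeros concentrate, and of negligible order elsewhere. The only structural input is the identity $\det J_H(z) = |p_n'(z)|^2 - |q_n'(z)|^2$, obtained from the Wirtinger derivatives $H_z = p_n'$, $H_{\ol z} = \ol{q_n'}$ of $H = p_n + \ol{q_n}$ viewed as a map $\CC\cong\RR^2\to\RR^2$. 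Consequently the integrand in~\eqref{eq:KacRiceVec} at a point $z$ is a functional only of the joint law of the two independent $\CC^2$-valued vectors $(p_n(z),p_n'(z))$ and $(q_n(z),q_n'(z))$, and each of these is the linear image $\sum_{k=0}^n A_k\,(z^k,\,k z^{k-1})$ of the i.i.d.\@ coefficient vector.

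First I would establish a quantitative multivariate local limit theorem for these vectors. Writing $|z|^2 = 1+t$, the ``effective dimension'' --- roughly the number of indices $k\le n$ with $|z^k|$ within a constant factor of $\max_k|z^k|$ --- is of order $\min\{n,1/t\}$, and on the annulus $\mathcal{A}_n := \{1+(\log n)^2/n\le |z|^2\le 1+(\log n)^{-1}\}$ it tends to infinity (from $\asymp\log n$ near the outer edge to $\asymp n/(\log n)^2$ near the inner edge). Using Edgeworth/Esseen-type estimates, with the finite-moment hypothesis controlling the cumulants, one shows that after normalization by its covariance the law of $(p_n(z),p_n'(z))$ is close, uniformly over $\mathcal{A}_n$, to the centered complex Gaussian with the same covariance, in a sense strong enough to compare both the density $\rho(0;z)$ of $H(z)$ and the conditional expectation $\EE(|\det J_H(z)|\mid H(z)=0)$ to their Gaussian counterparts. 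Because $|\det J_H(z)| = \bigl||p_n'(z)|^2 - |q_n'(z)|^2\bigr|$ is unbounded, the comparison of the conditional expectation needs a truncation step, for which the finite-moment hypothesis supplies the uniform integrability. I would carry this out first for absolutely continuous $\xi$, where~\eqref{eq:KacRiceVec} applies as stated; for lattice distributions the same spreading of the coefficient vectors still yields the anticoncentration (small-ball) bounds needed for a discrete version of Kac--Rice, or one convolves the coefficients with a vanishing Gaussian and passes to the limit using lower semicontinuity of the zero count. (It should also be noted that for distributions with atoms the literal statement of Conjecture~\ref{conj:iid} requires amending: in the Littlewood case the positive-probability event $\{q_n\equiv p_n\}$ already forces $\EE N_H(\CC)=\infty$, as there $H = 2\re p_n$ is almost surely a nonconstant real harmonic function, whose zero set is a curve.) The outcome of this step is that the $\xi$-integrand equals the Gaussian integrand times $1+o(1)$, uniformly on $\mathcal{A}_n$.

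It then remains to bound the complement of $\mathcal{A}_n$, where only an upper bound is needed. For $|z|^2\le 1+(\log n)^2/n$ and for $|z|^2\ge 1+(\log n)^{-1}$ one shows $\EE N_H = O(n\log\log n) = o(n\log n)$ for general $\xi$: near and inside the unit disk $p_n$ and $q_n$ converge to independent random power series with i.i.d.\@ finite-variance coefficients and a Jensen/argument-principle bound gives a uniform-in-$n$ estimate for the expected zero count, while in the far exterior one can use a robust deterministic count --- bounding zeros in a disk via the generalized argument principle \cite{ST} together with moment control of the increment of $\arg H$ and a separate bound on the sense-reversing zeros --- or again extend the local-limit comparison, whose error degrades gracefully and still reproduces the order in Theorem~\ref{thm:refined}. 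Combining the uniform comparison on $\mathcal{A}_n$ with the Gaussian estimate of Theorem~\ref{thm:refined}, integration of the $\xi$-integrand over $\mathcal{A}_n$ gives $\tfrac12 n\log n\,(1+o(1))$, and adding the negligible complement yields $\EE N_H(\CC)\sim\tfrac12 n\log n$.

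The hard part will be the uniformity of the local limit theorem throughout $\mathcal{A}_n$ --- especially near the outer edge, where only $\asymp\log n$ coefficients contribute effectively --- together with the fact that what must be compared is not a density alone but the conditional expectation of the unbounded quadratic functional $\bigl||p_n'|^2 - |q_n'|^2\bigr|$; one has to make the truncation/uniform-integrability bookkeeping quantitatively compatible with the local-limit rate so that the resulting $o(1)$ errors survive integration against the weight $|z|^{-2}\,dA(z)$ in~\eqref{eq:LiWei} without spoiling the $n\log n$ asymptotic. In the lattice case a further burden is quantitative anticoncentration for the linear statistics $\langle(z^k)_k,A\rangle$ and $\langle(kz^{k-1})_k,A\rangle$, uniform over this range of radii, which is where the combinatorial structure of the coefficient vectors enters.
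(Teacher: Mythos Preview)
The paper does not prove Conjecture~\ref{conj:iid}; it is stated as an open problem and revisited only informally in the concluding remarks on universality. There is therefore no proof in the paper to compare your attempt against.

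What you have written is explicitly a strategy outline, not a proof, and as such it is a reasonable roadmap. The universality scheme --- transfer the Gaussian Kac--Rice integrand to general $\xi$ via a quantitative local CLT on the leading annulus $\mathcal{A}_n$, then bound the complement separately --- is the natural approach, and you correctly isolate the two genuine pressure points: (i) near the outer edge of $\mathcal{A}_n$ only $\asymp\log n$ coefficients contribute, so the Berry--Esseen rate is at best $O((\log n)^{-1/2})$, and you must check this survives integration against the weight $(w-1)^{-1}$ (it does at leading order, but only barely, and controlling the \emph{conditional} expectation of the unbounded $|\,|p'|^2-|q'|^2\,|$ to that precision is where real work is needed); (ii) the lattice case requires anticoncentration estimates that are not off-the-shelf. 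Your observation that the conjecture as literally stated is false for atomic coefficient laws --- in the $\pm1$ case the event $\{p_n\equiv q_n\}$ has positive probability and then $H=2\Re p_n$ has a one-dimensional zero set, forcing $\EE N_H=\infty$ --- is correct and worth flagging; the intended statement presumably restricts to absolutely continuous coefficients or counts only isolated zeros.

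The thinnest part of your sketch is the complement of $\mathcal{A}_n$. Inside the disk the convergence-to-power-series argument is fine, but for the two slivers $1-\tfrac{\log n}{n}\le|z|^2\le1+\tfrac{(\log n)^2}{n}$ and $1+(\log n)^{-1}\le|z|^2\le2$ the paper's Gaussian bounds (Lemmas~\ref{lemma:sliver}, \ref{lemma:inside}, and Proposition~\ref{prop:tail}) rely on the explicit Kac--Rice integrand, and your proposed substitutes (generalized argument principle plus moment control of $\arg H$) are not obviously strong enough to recover the $O(n\log\log n)$ needed there without essentially redoing a local-limit comparison in those regions too. This is where a purported proof would most likely founder, and you should expect the bulk of the technical effort to land here rather than on $\mathcal{A}_n$ itself.
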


The following result, which is proved using an indirect probabilistic method, provides some initial insight on the unimodular problem by showing the existence of examples whose number of zeros grows more quickly than linearly in $n$.

\begin{thm}\label{thm:existence}
There exist harmonic polynomials $H(z) = p_n(z) + \ol{q_{n-1}(z)}$ with unimodular coefficients such that the number of zeros of $H$ is at least $\frac{2}{\pi} n \log n + O(n)$ as $n \rightarrow \infty$.
\end{thm}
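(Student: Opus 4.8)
The plan is to run a first-moment (averaging) argument against a suitable ensemble of *unimodular* random harmonic polynomials and invoke the fact that at least one outcome must meet or exceed the average. Concretely, let $\xi_0,\dots,\xi_n$ and $\eta_0,\dots,\eta_{n-1}$ be i.i.d.\@ random variables uniformly distributed on the unit circle (so each is unimodular with probability one), set $p_n(z) = \sum_{k=0}^n \xi_k z^k$ and $q_{n-1}(z) = \sum_{k=0}^{n-1} \eta_k z^k$, and let $H = p_n + \ol{q_{n-1}}$. Every realization of $H$ has unimodular coefficients, so it suffices to show $\EE N_H(\CC) \geq \tfrac{2}{\pi} n \log n + O(n)$; then some realization has at least that many zeros. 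The quantity $\tfrac{2}{\pi} n\log n$ is exactly $\tfrac{4}{\pi}$ times the $\tfrac12 n\log n$ of Theorem \ref{thm:iid}, which is a strong hint that the unimodular ensemble should behave, to leading order, like the Gaussian (Kac) ensemble up to a constant factor coming from the second moment of $|\xi|$ versus a Gaussian — i.e.\@ the argument should essentially reprove the lower-bound half of the Kac analysis with the new coefficient distribution.

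The key steps, in order. First, apply the Kac--Rice formula for vector fields \eqref{eq:KacRiceVec} to this ensemble; since the real and imaginary parts of $H$ are now sums of independent (but non-Gaussian) terms, one cannot write the integrand in the closed Gaussian form \eqref{eq:LiWei}, so instead I would lower-bound $\EE N_H$ region by region. Second, localize to the annulus $A_n = \{\, 1 + \tfrac{(\log n)^2}{n} \leq |z|^2 \leq 1 + (\log n)^{-1}\,\}$ identified in Theorem \ref{thm:refined} as carrying the full leading-order contribution in the Gaussian case. On this annulus, for $|z|$ in the stated range, the partial sums $p_n(z), q_n(z)$ and their derivatives are dominated by a large number $\asymp |z|^{2n}/(\,|z|^2-1)$ of comparably-sized independent summands, so a Berry--Esseen / CLT estimate shows the joint law of $\big(p_n(z), q_n(z), p_n'(z), q_n'(z)\big)$ (suitably normalized) is close to the corresponding complex Gaussian law, with error terms that are negligible after integration against $dA(z)$ over $A_n$. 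Third, feed this comparison into \eqref{eq:KacRiceVec}: the conditional expectation $\EE(|\det J_H(z)| \mid H(z)=0)\,\rho(0;z)$ for the unimodular model is, up to $(1+o(1))$, the Gaussian integrand of \eqref{eq:LiWei} rescaled by the ratio of second moments $\EE|\xi|^2 = 1$ against the complex-Gaussian normalization $\EE|A_k|^2 = 1$ — careful bookkeeping of this scaling is what produces the constant $\tfrac{2}{\pi}$ rather than $\tfrac12$. Fourth, integrate over $A_n$ using the asymptotic evaluation already carried out in the proof of Theorem \ref{thm:refined} to obtain $\EE N_H(A_n) \geq \tfrac{2}{\pi} n \log n + O(n)$, and discard the contributions from $|z|^2 < 1 + (\log n)^2/n$ and $|z|^2 > 1 + (\log n)^{-1}$ as nonnegative, so they only help the lower bound.

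The main obstacle is the quantitative CLT step: unlike the Gaussian case there is no exact formula for $\rho(0;z)$ or for the conditional Jacobian expectation, so one must control (i) the convergence of the four-dimensional (real eight-dimensional) density of $(p_n, q_n, p_n', q_n')$ to its Gaussian limit \emph{uniformly enough in $z \in A_n$} that the error survives multiplication by the large factor $|z|^{2n}$ and integration, and (ii) the uniform integrability needed to pass from convergence in distribution of $|\det J_H|$ to convergence of its conditional expectation (a truncation argument plus a fourth-moment bound on the entries of $J_H$, which are again sums of independent bounded-variance terms, should suffice). A secondary technical point is that near the inner edge $|z|^2 \approx 1 + (\log n)^2/n$ the effective number of dominant summands is only $\asymp (\log n)^2$, which is still $\to\infty$ but slowly, so the Berry--Esseen error there is $O((\log n)^{-1})$ — one should check this is harmless, e.g.\@ by slightly shrinking the annulus from the inner side (replacing $(\log n)^2$ by a faster-growing but still $o(n)$ gauge), which costs only $O(n)$ in the count and does not affect the $\tfrac{2}{\pi} n\log n$ leading term. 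If the uniform CLT proves too delicate to carry out cleanly, an alternative is a comparison-of-moments argument à la Lindeberg, interpolating coefficient-by-coefficient between the unimodular and Gaussian ensembles inside the Kac--Rice integrand, but I expect the direct CLT route to be cleaner given that the annulus $A_n$ has already been pinned down.
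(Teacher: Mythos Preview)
Your proposal has a genuine gap at the decisive step: the claim that ``careful bookkeeping of this scaling is what produces the constant $\tfrac{2}{\pi}$ rather than $\tfrac{1}{2}$'' is unsupported and, on your own reasoning, cannot be right. You correctly note that $\EE|\xi|^2 = 1 = \EE|A_k|^2$, so the second-moment (covariance) structure of your unimodular ensemble coincides exactly with that of the complex Gaussian Kac ensemble. A successful CLT/Berry--Esseen comparison on $A_n$ would therefore reproduce the Gaussian first intensity and yield $\EE N_H(A_n) \sim \tfrac{1}{2}\, n\log n$; indeed this is precisely Conjecture~\ref{conj:iid}, which the paper leaves open. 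Since $\tfrac{2}{\pi} > \tfrac{1}{2}$, the shortfall is of order $n\log n$, not $O(n)$, and the theorem as stated does not follow. (Separately, carrying out the uniform CLT inside Kac--Rice would itself amount to proving a case of that open conjecture, so even the $\tfrac{1}{2}\, n\log n$ bound is not a small technicality.)

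The paper's argument is entirely different and does not pass through Kac--Rice or universality for the two-dimensional problem. It uses a \emph{dependent} unimodular model: sample $q(z)=\sum_{k=0}^{n-1} e^{i\theta_k} z^k$ with i.i.d.\ uniform phases and take $p(z)=\beta z^n + q(z)$ with $\beta=\pm 1$. Then the system $p(z)+\overline{q(z)}=0$ splits, via real and imaginary parts, into $\Im\{z^n\}=0$ --- which is a union of $n$ lines through the origin --- together with a real polynomial equation along each line. On the $j$th line one gets a degree-$n$ polynomial in the real variable $r$ with leading coefficient $\beta$ and lower coefficients $2\cos(k\theta_j+\theta_k)$, which are independent, mean-zero, unit-variance and bounded. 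The Do--Nguyen--Vu theorem on real zeros of random polynomials then gives $\tfrac{2}{\pi}\log n + O(1)$ expected real zeros on each line; summing over the $n$ lines yields $\EE N = \tfrac{2}{\pi}\, n\log n + O(n)$, and some realization attains the mean. Thus the constant $\tfrac{2}{\pi}$ is the classical Kac constant for \emph{real} zeros of a univariate random polynomial, arising because the dependent choice $p=\beta z^n+q$ collapses the planar problem to $n$ one-dimensional ones --- not from any moment ratio in a two-dimensional computation.
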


The proof of Theorem \ref{thm:existence} uses an indirect probabilistic method that hinges on an estimate for the expected number of zeros of a harmonic polynomial where $q_{n-1}$ is sampled with i.i.d.\@ unimodular coefficients and $p_n(z)$ takes the special form $p_n(z)= \pm z^n + q_{n-1}(z)$.  For polynomials of this special form,  we have the following estimate that improves substantially the Bezout bound for large $n$.

\begin{thm}\label{thm:UB}
Let $p(z) = z^n + q(z)$, where 
$q(z)$ is a polynomial of degree $m<n$ with unimodular coefficients, and $q(0)=1$.
Then there exists $C>0$ such that the harmonic polynomial $p(z) + \ol{q(z)}$ has at most $C \cdot n^{3/2}$ zeros.
\end{thm}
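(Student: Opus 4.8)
The plan is to reduce the count to positive real roots of $2n$ explicit real polynomials of degree $n$, and then bound those root counts via Jensen's formula together with a discrepancy-type bound for real zeros of polynomials with bounded coefficients.

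\textbf{Step 1: geometric reduction.} Since $\overline{q(z)}$ denotes the conjugate of the \emph{value} $q(z)$, we have $H(z)=z^{n}+q(z)+\overline{q(z)}=z^{n}+2\re q(z)$ with $\re q(z)\in\RR$; hence $\im H(z)=\im(z^{n})=|z|^{n}\sin(n\arg z)$, so every zero of $H$ lies on one of the $2n$ rays $\arg z=\theta_{k}:=k\pi/n$, $k=0,\dots,2n-1$. The hypotheses $q(0)=1$, $|b_{j}|=1$ give $\re q(z)\ge 1-\sum_{j\ge1}|z|^{j}>0$ on $|z|\le 1/3$ (so $\re H>0$ there), while a zero with $|z|>1$ would force $|z|^{n}=2|\re q(z)|\le 2\sum_{j\le m}|z|^{j}<2|z|^{n}/(|z|-1)$, i.e.\ $|z|<3$; thus all zeros lie in $\{1/3\le|z|\le 3\}$. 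Writing $z=\rho e^{i\theta_{k}}$ on the $k$-th ray, $H(z)=0$ becomes $f_{k}(\rho)=0$ with
$$f_{k}(\rho)=(-1)^{k}\rho^{n}+2\sum_{j=0}^{m}\re\!\big(b_{j}e^{ij\theta_{k}}\big)\,\rho^{j},$$
a real polynomial of degree exactly $n$ (since $m<n$), with leading coefficient $\pm1$, constant term $f_{k}(0)=2$, all coefficients of modulus $\le 2$, and hence $\tfrac1{2\pi}\int_{0}^{2\pi}|f_{k}(e^{it})|^{2}\,dt=1+4\sum_{j\le m}\big|\re(b_{j}e^{ij\theta_{k}})\big|^{2}\le 5n$. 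The total number of zeros of $H$ equals $\sum_{k=0}^{2n-1}Z_{k}$, where $Z_{k}$ is the number of roots of $f_{k}$ in $(1/3,3)$.

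\textbf{Step 2: counting the roots.} Fix a small constant $\delta_{0}>0$. By Jensen's formula applied to $f_{k}$, and to the reciprocal polynomial $w\mapsto w^{n}f_{k}(1/w)$ (whose value at $0$ is $\pm1$), using $\tfrac1{2\pi}\int_{0}^{2\pi}\log|f_{k}(e^{it})|\,dt\le\tfrac12\log\!\big(\tfrac1{2\pi}\int_{0}^{2\pi}|f_{k}(e^{it})|^{2}dt\big)\le\tfrac12\log(5n)$ and $|f_{k}(0)|=2$, the number of roots of $f_{k}$ with $|\cdot|\le 1-\delta_{0}$, and the number with $|\cdot|\ge 1+\delta_{0}$, are each $O(\log n)$. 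It remains to bound the number of real roots of $f_{k}$ in the critical band $[1-\delta_{0},1+\delta_{0}]$, which I claim is $O(\sqrt n)$. This rests on a bound of Erd\H{o}s--Tur\'an / Bloch--P\'olya type: a polynomial of degree $n$ all of whose coefficients have modulus $\le 1$ and whose constant term has modulus bounded below by a fixed constant has $O(\sqrt n)$ real zeros in any fixed bounded interval. (The role of $q(0)=1$ is precisely to make $|f_{k}(0)|=2$; with no lower bound on the constant term $f_{k}$ could have $\asymp n$ real zeros clustered in the band.) Granting this, $Z_{k}=O(\sqrt n+\log n)=O(\sqrt n)$ for every $k$, and summing over the $2n$ rays gives $\sum_{k}Z_{k}=O(n^{3/2})$, as asserted.

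\textbf{Main obstacle.} The delicate point is the real‑root estimate in $[1-\delta_{0},1+\delta_{0}]$. Descartes' rule of signs only gives $m+1\le n$ roots per ray there, hence the useless total $O(n^{2})$ (no better than B\'ezout), so one must genuinely exploit the boundedness of the coefficients of $q$ — equivalently, that $f_{k}$ has $L^{2}$‑norm $O(\sqrt n)$ on the unit circle while $\deg f_{k}=n$ — to prevent many real zeros near $|z|=1$. The Erd\H{o}s--Tur\'an discrepancy inequality, applied on vanishingly thin arcs around the positive real axis, already yields $O(\sqrt{n\log n})$ real roots per ray and thus $\sum_{k}Z_{k}=O(n^{3/2}\sqrt{\log n})$; shaving the stray logarithm to reach exactly $C\,n^{3/2}$ is the remaining work. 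This is where the ``gap'' structure of $p=\pm z^{n}+q$ (no monomials of degree strictly between $m$ and $n$) together with $q(0)=1$ should be pressed harder — e.g.\ through a Jensen estimate centered at a point just outside the unit disk at which $|f_{k}|$ is provably large — and I expect the case $m$ close to $n$, where the zero set genuinely reaches $|z|$ of constant size, to require the most care.
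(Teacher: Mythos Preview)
Your geometric reduction in Step~1 is exactly the paper's: taking imaginary parts forces zeros onto the $n$ lines $\Im(z^n)=0$, and on each line one is left with a real polynomial of degree $n$ whose constant term is $2$ (from $q(0)=1$), leading coefficient is $\pm 1$, and all coefficients have modulus at most $2$. So the architecture matches.

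The gap is precisely where you flag it, in the ``Main obstacle'' paragraph. You correctly observe that Erd\H{o}s--Tur\'an discrepancy gives only $O(\sqrt{n\log n})$ real zeros per line, and you invoke a ``Bloch--P\'olya type'' bound for the sharp $O(\sqrt n)$ without supplying a proof or reference. The paper closes this gap by citing Borwein--Erd\'elyi--K\'os \cite[Thm.~4.1]{BEK}, which states that a degree-$n$ polynomial with $|a_0|=1$ and $|a_j|\le 1$ for all $j$ has at most $c\sqrt n$ zeros in $[-1,1]$; this is exactly the deterministic estimate you were reaching for (Bloch--P\'olya is the \emph{probabilistic} ancestor). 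The paper applies BEK twice---once to $f_k$ directly and once after a rescaling of the variable---to cover the whole real line, and then sums over the $n$ lines.

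Your Jensen-based localization to $[1/3,3]$ and your splitting into ``far from the unit circle'' versus ``critical band'' are therefore unnecessary detours: once you have the BEK reference, it handles all real zeros at once and the Jensen step contributes nothing. The substantive content of both arguments is the same; you were simply missing the citation that removes the stray $\sqrt{\log n}$.
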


The proof of this result is nonprobabilistic and relies on an estimate \cite[Thm. 4.1]{BEK} of Borwein, Erd\'elyi, and K\'oz for the number of real zeros of a univariate (analytic) polynomial under an appropriate constraint on the size of the coefficients.
Their result provides a bound of order $\sqrt{n}$ on the number of real zeros for polynomials satisfying a bound from below on the moduli of the constant coefficient and leading coefficient along with a bound from above on the moluli of the remaining coefficients (as noted in \cite{BEK} a slightly weaker estimate with an additional logarithmic factor follows from the celebrated results of Erd\"os and Tur\'an on the angular distribution of zeros in the complex plane \cite{ErdosTuran}).

\subsection*{Outline of the paper}
In Section \ref{sec:prelim}, we give an alternative proof of Li and Wei's Theorem \ref{thm:LiWei} that serves as an important preliminary result.
This will make the paper more self-contained and also gives us an opportunity to demystify Li and Wei's original proof by avoiding a certain clever step where they rewrite the absolute value function as an integral in order to deal with it indirectly.  We instead compute the joint density of $H(z)$ and its Jacobian using the method of characteristic functions (Fourier transforms) and a well-known result for Hermitian forms in Gaussian variables \cite{Turin60}, after which we are able to deal with the absolute value directly.
In Section \ref{sec:main}, we prove Theorems \ref{thm:refined} and \ref{thm:UV} by providing the necessary asymptotic analysis of the integral in \eqref{eq:LiWei}.  In polar coordinates the integrand is independent of the angular coordinate, reducing this to a one-dimensional integral.  The asymptotic analysis is yet delicate, and we will need to separate the interval of integration into several subintervals where different strategies are used (as a byproduct of this ad hoc approach one may, if desired, extract from the proofs additional information providing estimates for the average number of zeros over a more refined partition of annuli than appears in Theorem \ref{thm:refined}).  The proofs of the deterministic results Theorems \ref{thm:existence} and \ref{thm:UB} are provided in Section \ref{sec:existenceproof}.
We make some concluding remarks in Section \ref{sec:concl}, including discussion of open problems and potential future research directions.

\section{Proof of Theorem \ref{thm:LiWei} (Li and Wei's Kac-Rice type integral formula)}\label{sec:prelim}


The following is a generalization and clarification of Li and Wei's Theorem \ref{thm:LiWei}, giving the expectation of the number of zeros for a harmonic polynomial with independent complex Gaussian coefficients.
\begin{thm}\label{ThmGenLiWei}
The expectation $\mathbb{E}N_H(T)$ of the number of zeros of 
\be
H_{n,m}(z)=\sum_{j=0}^nA_jz^j+\sum_{j=0}^mB_j\overline{z}^j
\ee
where $A_0,\ldots, A_n$ and $B_0,\ldots,B_m$ are mutually independent complex Gaussian random variables with $\mathbb{E}A_j=\mathbb{E}B_j=\mathbb{E}[A_j^2]=\mathbb{E}[B_j^2]=0$ and $\mathbb{E}A_j\overline{A}_j=\alpha_j$ and $\mathbb{E}B_j\overline{B}_j=\beta_j$ on a domain $T\subset\mathbb{C}$ is given by:
\begin{equation}
\label{LiWeiGeneral}
\mathbb{E}N_F(T)=\frac{1}{\pi}\int_T\frac{1}{\lvert z\rvert^2}\frac{r_1^2+r_2^2-2r_{12}^2}{r_3^2\sqrt{(r_1+r_2)^2-4r_{12}^2}}dA(z),
\end{equation}where $dA(z)$ denotes the Lebesgue measure on the plane, and
\begin{align*}
r_3=&\sum_{j=0}^n\alpha_j\lvert z\rvert^{2j}+\sum_{j=0}^m\beta_j\lvert z\rvert^{2j}\qquad\qquad\quad r_{12}=\left(\sum_{j=1}^nj\alpha_j\lvert z\rvert^{2j}\right)\left(\sum_{j=1}^mj\beta_j\lvert z\rvert^{2j}\right)
\\r_1=&r_3\sum_{j=1}^nj^2\alpha_j\lvert z\rvert^{2j}-\left(\sum_{j=1}^nj\alpha_j\lvert z\rvert^{2j}\right)^2
\ \ r_2=r_3\sum_{j=1}^mj^2\beta_j\lvert z\rvert^{2j}-\left(\sum_{j=1}^mj\beta_j\lvert z\rvert^{2j}\right)^2
\end{align*}
\end{thm}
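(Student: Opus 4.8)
The plan is to deduce \eqref{LiWeiGeneral} from the Kac--Rice formula \eqref{eq:KacRiceVec} applied to the random vector field $H\colon\RR^2\to\RR^2$, evaluating the conditional expectation $\EE(|\det J_H(z)|\mid H(z)=0)$ by the method of characteristic functions. Away from the origin the centered Gaussian field $(H(z),p'(z),q'(z))$ is nondegenerate (the power series appearing as variances are not proportional for $z\neq0$), which licenses \eqref{eq:KacRiceVec}; and since $H(0)=A_0+\ol{B_0}\neq0$ almost surely, zeros of $H$ avoid the measure-zero point $z=0$. Using the Wirtinger derivatives $\partial_z H=p'(z)$ and $\partial_{\bar z}H=\ol{q'(z)}$, the real Jacobian determinant is $\det J_H(z)=|p'(z)|^2-|q'(z)|^2$, which we must integrate in absolute value since a harmonic map need not be sense-preserving.

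The crucial structural point is that $(H(z),\,p'(z),\,\ol{q'(z)})$ --- with the conjugate on the last entry --- is a \emph{circularly symmetric} complex Gaussian vector: it is a linear image of the circular Gaussians $A_1,\dots,A_n,\ol{B_1},\dots,\ol{B_m}$, and all pseudo-covariances $\EE[X_iX_j]$ vanish because $\EE[A_j^2]=\EE[B_j^2]=0$ and $p,q$ are independent. Its $3\times3$ Hermitian covariance matrix is computed entrywise from the sums $\sum\alpha_j|z|^{2j}$, $\sum j\alpha_j|z|^{2j}$, $\sum j^2\alpha_j|z|^{2j}$ and their $\beta$-analogues; in particular $\EE|H(z)|^2=r_3$, so the density factor is $\rho(0;z)=1/(\pi r_3)$. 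Conditioning on $H(z)=0$ is a single linear constraint, so the conditional law of $(p'(z),\ol{q'(z)})$ is again a circular complex Gaussian, with covariance matrix $\widetilde\Sigma$ given by the corresponding Schur complement; a short computation shows that, after pulling out the common factor $(|z|^2 r_3)^{-1}$, the diagonal entries of $\widetilde\Sigma$ are $r_1$ and $r_2$ and the off-diagonal entry has modulus $r_{12}$ (the argument of $z$ enters only through a unimodular phase that will drop out).

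It remains to evaluate $\EE(|\,|p'(z)|^2-|q'(z)|^2\,|\mid H(z)=0)=\EE|Y^*MY|$, where $Y$ is circular complex Gaussian with covariance $\widetilde\Sigma$ and $M=\mathrm{diag}(1,-1)$. Here I would use the classical formula for the characteristic function of a Hermitian form in a complex Gaussian vector (as in \cite{Turin60}), namely $\EE[e^{itY^*MY}]=\det(I-it\,\widetilde\Sigma M)^{-1}$, which in this $2\times2$ situation is an explicit rational function $(1+iat+bt^2)^{-1}$ with real $a,b$ built from $r_1,r_2,r_{12},r_3$ and $|z|$ (the phase cancels in the determinant). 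Inverting this transform exhibits the conditional density of $\det J_H(z)$ as an elementary two-sided exponential, after which one integrates $|w|$ against it directly; equivalently, one feeds the characteristic function into $|x|=\tfrac1\pi\int_{\RR}\tfrac{1-\cos(tx)}{t^2}\,dt$ to obtain $\EE|Y^*MY|=\tfrac1\pi\int_{\RR}\tfrac{1-\mathrm{Re}\,\EE[e^{itY^*MY}]}{t^2}\,dt$, a rational integral with no real poles that evaluates by partial fractions (or residues) to $(a^2+2b)/\sqrt{a^2+4b}$. Substituting the values of $a$ and $b$ converts $a^2+2b$ into $(|z|^2r_3)^{-2}(r_1^2+r_2^2-2r_{12}^2)$ and $a^2+4b$ into $(|z|^2r_3)^{-2}\big((r_1+r_2)^2-4r_{12}^2\big)$; combining this with $\rho(0;z)=1/(\pi r_3)$ in \eqref{eq:KacRiceVec} yields exactly \eqref{LiWeiGeneral}, and specializing $\alpha_j\equiv\beta_j\equiv1$ recovers Theorem \ref{thm:LiWei}.

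I expect the main obstacle to be the Gaussian bookkeeping in the middle step: choosing the right linear functionals so that circular symmetry is preserved (working with $\ol{q'(z)}$ rather than $q'(z)$), computing the conditional covariance $\widetilde\Sigma$ correctly and matching its entries to $r_1,r_2,r_{12}$, and then carrying out the closed-form evaluation of the rational Fourier integral. Verifying the nondegeneracy that licenses \eqref{eq:KacRiceVec} is a minor but necessary check.
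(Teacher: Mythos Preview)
Your proposal is correct and follows essentially the same route as the paper: Kac--Rice formula, circular Gaussian conditioning via a Schur complement, Turin's formula for the characteristic function of the Hermitian form $|p'|^2-|q'|^2$, and then evaluating $\EE|\cdot|$. The only cosmetic difference is that the paper works with the rescaled pair $(zP'(z),\bar z Q'(\bar z))$ so that the conditional covariance is real (equal to $\tfrac{1}{r_3}\left[\begin{smallmatrix}r_1 & -r_{12}\\ -r_{12} & r_2\end{smallmatrix}\right]$), whereas you carry a unimodular phase and observe it cancels in $\det(I-it\widetilde\Sigma M)$; and for the final step the paper inverts the characteristic function by residues and integrates $|y|$ against the resulting two-sided exponential density (your first option), explicitly avoiding the integral identity $|x|=\tfrac{1}{\pi}\int\tfrac{1-\cos(tx)}{t^2}\,dt$ (your second option), which was Li and Wei's original trick.
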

The result in \cite{LiWei} was originally stated for $\alpha_j=\binom{n}{j}$ and $\beta_j=\binom{m}{j}$ and also for $\alpha_j=\beta_j=1$, but as pointed out in \cite{Lerariotruncated} the above more general result follows from their proof.  We also explicitly include the condition $\mathbb{E}[A_j^2]=\mathbb{E}[B_j^2]=0$ that the pseudocovariance of each random variable is $0$, a condition that was not stated in \cite{LiWei} but is implicit when they assume independence of the real and imaginary parts of complex Gaussians.

The following proof uses some essential elements from the original proof \cite{LiWei}, but follows a different route using a result on Conditional Complex Gaussian vectors from \cite{AndyThesis} to avoid moving to real random variables and a theorem from \cite{Turin60} on the characteristic function (Fourier transform) of a Hermitian form in Gaussian variables in order to avoid using an integral formula for the absolute value.  In the proof below, for a matrix $M$ with complex entries, we use $M'$ to denote its transpose and $M^*$ to denote its conjugate transpose.
\begin{proof}
We define
\begin{equation*}P(z)=\sum_{j=0}^nA_jz^j\qquad Q(z)=\sum_{j=0}^mB_jz^j\end{equation*}
We use the Kac-Rice formula as stated in \cite[Lemma 2.1]{LiWei}:
\begin{equation}\label{KacRice}
\mathbb{E}N_H(T)=\int_T\mathbb{E}\left(\lvert\det J_H(z)\rvert\big\vert H(z)=0\right)\rho(0;z)dA(z),
\end{equation}
where, for each $z$, $\rho(u;z)$ is the probability density function of $u=H(z)$.

The modulus of the Jacobian determinant of $H(z)=P(z)+Q(\overline{z})$ is given by
\begin{equation*}\lvert J_H(z)\rvert=\left\lvert\left\lvert P'(z)\right\rvert^2-\left\lvert Q'(\overline{z})\right\rvert^2\right\rvert=\frac{1}{\lvert z\rvert^2}\left\lvert\left\lvert zP'(z)\right\rvert^2-\left\lvert \overline{z}Q'(\overline{z})\right\rvert^2\right\rvert\end{equation*}
To compute the covariance matrix of the (column) vector $[zP'(z)\ \overline{z}Q'(\overline{z})]'$ conditioned on $H(z)=0$ we use the following result from \cite[Cor. 21]{AndyThesis} whose proof is based on elementary linear algebra.
\begin{thm}
Let $\mathbf{Z}_1$ and $\mathbf{Z}_2$ be two complex Gaussian vectors such that the appended vector $\tiny\begin{bmatrix}\mathbf{Z}_1\\\mathbf{Z}_2\end{bmatrix}$ is a complex Gaussian vector. Let $\mathbf{Z}_1$ have mean $\mu_1$, covariance $\Gamma_1$, and pseudocovariance $\EE [\mathbf{Z}_1 \mathbf{Z}_1'] = 0$, and similarly for $\mathbf{Z}_2$, such that $\mathbb{E}[\mathbf{Z}_1\mathbf{Z}_2']=0$.  Given a fixed vector, $\mathbf{z}_2\in\mathbb{C}^n$, the random variable $\mathbf{Z}_1\big|_{\mathbf{Z}_2=\mathbf{z}_2}$ 
 is Gaussian with mean
\begin{equation*}\mu=\mu_1+\Gamma_{12}\Gamma_2^{-1}(\mathbf{z}_2-\mu_2),\end{equation*}
covariance matrix
\begin{equation*}\Gamma=\Gamma_1-\Gamma_{12}\Gamma_2^{-1}\Gamma_{12}^*,\end{equation*}
and pseudocovariance matrix of $0$.  Here, $\Gamma_{12}=\mathbb{E}[\mathbf{Z}_1\mathbf{Z}_2^*]$.
\end{thm}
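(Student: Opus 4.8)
The plan is to establish the conditional formula by the complex analogue of the classical Schur-complement (Gram--Schmidt) argument: one subtracts from $\mathbf{Z}_1$ its best linear predictor in terms of $\mathbf{Z}_2$, shows the residual is independent of $\mathbf{Z}_2$, and reads off the conditional law. Since the hypotheses and the conclusion are covariant under translation, I would first reduce to the centered case $\mu_1=\mu_2=0$, restoring the means at the end; throughout, $\Gamma_2$ is invertible (implicit in the statement through the appearance of $\Gamma_2^{-1}$). Set
$$\mathbf{W} := \mathbf{Z}_1 - \Gamma_{12}\Gamma_2^{-1}\mathbf{Z}_2 .$$
The appended vector $(\mathbf{W},\mathbf{Z}_2)$ is the image of $(\mathbf{Z}_1,\mathbf{Z}_2)$ under an invertible $\mathbb{C}$-linear map (block upper triangular with identity blocks on the diagonal), so it is again a complex Gaussian vector (a $\mathbb{C}$-linear map is $\mathbb{R}$-linear on real and imaginary parts, and real-linear images of real Gaussian vectors are real Gaussian), and its pseudocovariance remains $0$ since $\mathbb{E}[(A\mathbf{V})(A\mathbf{V})']=A\,\mathbb{E}[\mathbf{V}\mathbf{V}']\,A'$ vanishes whenever $\mathbb{E}[\mathbf{V}\mathbf{V}']=0$.

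The crux is that $\mathbf{W}$ and $\mathbf{Z}_2$ are independent. Using $\Gamma_{12}=\mathbb{E}[\mathbf{Z}_1\mathbf{Z}_2^*]$ and $\Gamma_2=\mathbb{E}[\mathbf{Z}_2\mathbf{Z}_2^*]$ one computes $\mathbb{E}[\mathbf{W}\mathbf{Z}_2^*]=\Gamma_{12}-\Gamma_{12}\Gamma_2^{-1}\Gamma_2=0$, and, now invoking the pseudocovariance hypotheses $\mathbb{E}[\mathbf{Z}_1\mathbf{Z}_2']=0=\mathbb{E}[\mathbf{Z}_2\mathbf{Z}_2']$, one gets the cross-pseudocovariance $\mathbb{E}[\mathbf{W}\mathbf{Z}_2']=0$ as well. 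I expect this to be the one genuinely delicate point: unlike the real case, for complex Gaussian vectors vanishing of the Hermitian cross-covariance $\mathbb{E}[\mathbf{W}\mathbf{Z}_2^*]$ alone does \emph{not} force independence --- one must also kill the cross-pseudocovariance $\mathbb{E}[\mathbf{W}\mathbf{Z}_2']$, and that is exactly what the hypotheses $\mathbb{E}[\mathbf{Z}_i\mathbf{Z}_j']=0$ provide. Once both cross terms vanish, the joint covariance of the real and imaginary parts of $\mathbf{W}$ and $\mathbf{Z}_2$ is block diagonal, so these jointly real-Gaussian vectors are independent; the remaining care is purely bookkeeping the conjugate transpose $(\cdot)^*$ against the ordinary transpose $(\cdot)'$.

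Finally, writing $\mathbf{Z}_1=\mathbf{W}+\Gamma_{12}\Gamma_2^{-1}\mathbf{Z}_2$ and conditioning on $\mathbf{Z}_2=\mathbf{z}_2$, the independence just established shows that $\mathbf{Z}_1$ given $\mathbf{Z}_2=\mathbf{z}_2$ has the law of the deterministic translate $\mathbf{W}+\Gamma_{12}\Gamma_2^{-1}\mathbf{z}_2$ of $\mathbf{W}$; in particular it is complex Gaussian, with pseudocovariance $0$, and, in the centered reduction, with mean $\Gamma_{12}\Gamma_2^{-1}\mathbf{z}_2$. A short expansion of $\mathbb{E}[\mathbf{W}\mathbf{W}^*]$, in which Hermiticity $\Gamma_2^*=\Gamma_2$ causes the three remaining terms to collapse to a single one, gives
$$\mathbb{E}[\mathbf{W}\mathbf{W}^*]=\Gamma_1-\Gamma_{12}\Gamma_2^{-1}\Gamma_{12}^{*}.$$
Undoing the centering reduction (replace $\mathbf{z}_2$ by $\mathbf{z}_2-\mu_2$ and add back $\mu_1$) then yields the stated mean $\mu=\mu_1+\Gamma_{12}\Gamma_2^{-1}(\mathbf{z}_2-\mu_2)$ and leaves the covariance and pseudocovariance as claimed.
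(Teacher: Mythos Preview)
The paper does not actually prove this theorem; it is quoted from \cite[Cor.~21]{AndyThesis} with the remark that its ``proof is based on elementary linear algebra,'' and then applied as a black box. Your argument---subtract the best $\mathbb{C}$-linear predictor, use the vanishing of both the Hermitian cross-covariance and the cross-pseudocovariance to obtain independence of the residual from $\mathbf{Z}_2$, and read off the Schur complement---is correct and is exactly the sort of elementary linear-algebra proof the paper alludes to. Your emphasis on the role of the pseudocovariance hypothesis (that $\mathbb{E}[\mathbf{W}\mathbf{Z}_2^*]=0$ alone is insufficient in the complex case) is well placed and is the only point where the argument differs nontrivially from the real case.
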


Applying this result to the case at hand, we set  $\mathbf{Z}_1=\tiny\begin{bmatrix}zP'(z)\\\overline{z}Q'(\overline{z})\end{bmatrix}$ and $\mathbf{Z}_2=H(z)$ while $\mathbf{z}_2=0$.  $\tiny\begin{bmatrix}\mathbf{Z}_1\\\mathbf{Z}_2\end{bmatrix}$ is a complex Gaussian vector as each component is a linear combination of independent complex Gaussian random variables.  Furthermore, for a fixed $z$, $\mathbf{Z}_1\mathbf{Z}_2$ is a $2\times1$ matrix whose components are linear combinations of $A_jA_k$, $A_jB_k$, and $B_jB_k$, each of which we assume to have $0$ expectation.
Then
\begin{equation*}\Gamma_1=\begin{bmatrix}
\sum_{j=1}^n\alpha_j|z|^{2j}&0
\\0&\sum_{j=1}^m\beta_j|z|^{2j}
\end{bmatrix},\end{equation*}
\begin{equation*}\Gamma_{12}=\begin{bmatrix}
\sum_{j=1}^nj\alpha_j|z|^{2j}
\\\sum_{j=1}^mj\beta_j|z|^{2j}
\end{bmatrix},\end{equation*}
and $\Gamma_2=r_3$  It follows that
\begin{equation*}\Gamma=\begin{bmatrix}
\gamma_{11}&\gamma_{12}
\\ \gamma_{21} &\gamma_{22}
\end{bmatrix}=\frac{1}{r_3}\begin{bmatrix}
r_1&-r_{12}
\\-r_{12}&r_2
\end{bmatrix}.\end{equation*}
\begin{thm}[from \cite{Turin60}]
\label{thm:Turin}
Let $V=\begin{bmatrix}
v_1&\cdots&v_n
\end{bmatrix}'$ be such that $v_k=x_k+iy_k$ and $x_k,y_k$ are normally distributed with mean zero such that
\begin{equation}\label{pscov:eq}\mathbb{E}[x_jx_k]=\mathbb{E}[y_jy_k]\text{ and }\mathbb{E}[x_jy_k]=-\mathbb{E}[y_jx_k]\end{equation}
and $\Gamma=\mathbb{E}[VV^*]$, the covariance matrix.  Then for a Hermitian matrix $Q$, the Hermitian form $Y=V^*QV$ has characteristic function
\begin{equation*}\phi(t)=|I-it\Gamma Q|^{-1}.\end{equation*}
\end{thm}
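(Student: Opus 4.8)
The plan is to read conditions \eqref{pscov:eq} as the statement that $V$ is a \emph{circularly symmetric} (proper) complex Gaussian vector: taking $x_k,y_k$ jointly Gaussian, \eqref{pscov:eq} is equivalent to the vanishing of the pseudocovariance $\EE[VV']=0$, so that the law of $V$ is determined by $\Gamma=\EE[VV^*]$ alone, and we write $V\sim N_{\CC}(0,\Gamma)$. Two facts about such vectors drive the proof: (i) if $U$ is unitary then $UV\sim N_{\CC}(0,U\Gamma U^*)$, since the pseudocovariance stays $0$; in particular $N_{\CC}(0,I)$ is unitarily invariant; and (ii) if $\Gamma$ is nonsingular then $W:=\Gamma^{-1/2}V$, with $\Gamma^{-1/2}$ the Hermitian positive definite square root, satisfies $W\sim N_{\CC}(0,I)$, i.e.\ the entries of $W$ are i.i.d.\ standard complex Gaussians.

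Assuming first that $\Gamma$ is invertible, whitening gives $Y=V^*QV=W^*RW$ with $R:=\Gamma^{1/2}Q\Gamma^{1/2}$ Hermitian. Diagonalize $R=U\Lambda U^*$ with $U$ unitary and $\Lambda=\operatorname{diag}(\lambda_1,\dots,\lambda_n)$, $\lambda_k\in\RR$. Since $U^*W\sim N_{\CC}(0,I)$ as well, $Y$ has the same distribution as $\sum_{k=1}^n\lambda_k|w_k|^2$ where $w_1,\dots,w_n$ are i.i.d.\ $N_{\CC}(0,1)$; writing $w_k=a_k+ib_k$ with $a_k,b_k\sim N_{\RR}(0,\tfrac12)$ independent, each $|w_k|^2=a_k^2+b_k^2$ is exponentially distributed with mean $1$, hence has characteristic function $s\mapsto(1-is)^{-1}$. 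By independence,
\begin{equation*}
\phi(t)=\EE[e^{itY}]=\prod_{k=1}^n\frac{1}{1-it\lambda_k}=\frac{1}{\det(I-it\Lambda)}=\frac{1}{\det(I-itR)},
\end{equation*}
and the determinant identity $\det(I-AB)=\det(I-BA)$ applied with $A=it\Gamma^{1/2}Q$ and $B=\Gamma^{1/2}$ gives $\det(I-itR)=\det(I-it\Gamma Q)$, which is the asserted formula.

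To remove the invertibility assumption I would argue by continuity: apply the above to $\Gamma_\e:=\Gamma+\e I$, which is the covariance of $V+\sqrt{\e}\,Z$ for $Z\sim N_{\CC}(0,I)$ independent of $V$ (and still satisfies \eqref{pscov:eq}), and let $\e\to0^+$; then $V+\sqrt{\e}\,Z\to V$ in distribution so the characteristic functions converge pointwise in $t$, while $\det(I-it\Gamma_\e Q)\to\det(I-it\Gamma Q)$ by continuity of the determinant. The whole argument is elementary linear algebra and bookkeeping once the whitening step (ii) is available, so I do not anticipate a real obstacle; the only places deserving care are checking that \eqref{pscov:eq} is equivalent to $\EE[VV']=0$ (a brief computation in the real coordinates $x_k,y_k$) and the singular case just sketched. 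As an alternative to the diagonalization one can compute the complex Gaussian integral directly, $\pi^{-n}(\det\Gamma)^{-1}\int_{\CC^n}e^{-v^*(\Gamma^{-1}-itQ)v}\,dA(v)=|I-it\Gamma Q|^{-1}$, the convergence and the value being obtained by analytic continuation in $t$ from $t=0$ because the Hermitian part $\Gamma^{-1}$ of the quadratic form is positive definite; this variant, however, still presupposes $\Gamma$ nonsingular and so again relies on the same limiting step.
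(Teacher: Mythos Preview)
The paper does not give its own proof of this statement; it is quoted verbatim as a result from \cite{Turin60} and then applied as a black box in the derivation of Theorem~\ref{ThmGenLiWei}. So there is nothing in the paper to compare your argument against.

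Your argument is correct and is the standard one: the conditions \eqref{pscov:eq} are indeed equivalent to $\EE[VV']=0$, the whitening $W=\Gamma^{-1/2}V$ and subsequent unitary diagonalization reduce $Y$ to an independent sum $\sum_k\lambda_k|w_k|^2$ of scaled unit exponentials, and the Sylvester identity $\det(I-AB)=\det(I-BA)$ converts $\det(I-it\Gamma^{1/2}Q\Gamma^{1/2})$ into $\det(I-it\Gamma Q)$. The limiting step for singular $\Gamma$ is also fine: $(V+\sqrt{\e}Z)^*Q(V+\sqrt{\e}Z)\to V^*QV$ in probability, hence in distribution, so the pointwise limit of characteristic functions matches the determinant limit. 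One small cosmetic point: in your Sylvester application you might note that even though $it\Gamma^{1/2}Q$ is not square in general settings, here both factors are $n\times n$ so the identity is immediate from $\det(I-AB)=\det(I-BA)$ for square $A,B$.
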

Satisfying \eqref{pscov:eq} is equivalent to saying that the pseudo-covariance matrix, $\mathbb{E}[VV']$, is $0$.

Let $Q$ be the $2\times2$ diagonal matrix with $1$ and $-1$ on the diagonal and $V'=[zP'(z)\ zQ'(z)]$.  By Theorem \ref{thm:Turin} the characteristic function of $Y=|zP'(z)|^2-|zQ'(z)|^2$ is
\begin{equation}\label{eq:char}\begin{vmatrix}
1-it\gamma_{11}&it\gamma_{12}
\\-it\gamma_{12}&1+it\gamma_{22}
\end{vmatrix}^{-1}=\frac{1}{1+it(\gamma_{22}-\gamma_{11})+t^2(\gamma_{11}\gamma_{22}-\gamma_{12}^2)}\end{equation}
where $\Gamma=[\gamma_{jk}]$.  Then by the Fourier inversion formula we find the probability density function of $Y$ by integrating the product of \eqref{eq:char} and $e^{-iyt}/2\pi$ over $\mathbb{R}$ with respect to $t$.  The resulting integral can be computed using residues which gives the following expression for the probability density
\begin{equation*}\frac{1}{\sqrt{(\gamma_{22}+\gamma_{11})^2-4\gamma_{12}^2}}\exp\left\{\frac{-y(\gamma_{22}-\gamma_{11})-|y|\sqrt{(\gamma_{22}+\gamma_{11})^2-4\gamma_{12}^2}}{2(\gamma_{11}\gamma_{22}-\gamma_{12}^2)}\right\}.\end{equation*}
We then find the desired expectation (appearing inside the integral in \eqref{KacRice}) by integrating the product of this function with $|y|$ over the real line with respect to $y$, yielding
\begin{equation*}\frac{\gamma_{22}^2+\gamma_{11}^2-2\gamma_{12}^2}{\sqrt{(\gamma_{22}+\gamma_{11})^2-4\gamma_{12}^2}}.\end{equation*}
Using the values for $\Gamma$, this gives us
\begin{equation}\frac{r_1^2+r_2^2-2r_{12}^2}{r_3\sqrt{(r_1+r_2)^2-4r_{12}^2}}.\end{equation}

Multiplication by $\frac{1}{|z|^2}$ yields $\mathbb{E}[|J_H(z)|\big|H(z)=0]$.  Combining this with the fact that the p.d.f. of $H(z)$, a complex Gaussian with mean zero and covariance $r_3$, evaluated at zero is $\frac{1}{\pi r_3}$ completes the proof.
\end{proof}

\section{Proof of Theorems \ref{thm:refined} and \ref{thm:UV}}\label{sec:main}

Applying Theorem \ref{ThmGenLiWei} in the case of i.i.d.\@ coefficients, and integrating in polar coordinates $z=re^{i \theta}$ while making the change of variables $w = r^2$, $dw = 2r dr$, we obtain the following identity for the expected number of zeros
in an annulus $\Omega = \{ a<|z|^2<b \}$.

\be 
\EE N_H(a < |z|^2 < b) = \int_a^b \frac{1}{w} \frac{r_1^2+r_2^2-2r_{12}^2}{r_3^2\sqrt{(r_1+r_2)^2-4r_{12}^2}} dw,
\ee
where $r_1 = (a_n + a_m) c_n - b_n^2$, $r_2 = (a_n + a_m)c_m - b_m^2$, 
$r_{12} = b_n b_m$,
and $r_3 = a_n + a_m$, with
\be
a_k = \sum_{j=0}^k w^j, \quad
b_k = \sum_{j=1}^k jw^j, \quad
c_k = \sum_{j=0}^k j^2 w^j.
\ee

In particular, when $m=n$, we have
\be 
\EE N_H(a<|z|^2<b) = \int_{a}^b \frac{\sqrt{a_n^2 c_n^2-a_n c_n b_n^2}}{2 w a_n^2} dw.
\ee

Let us write this as
\be 
\EE N_H(a<|z|^2<b) = \int_{a}^b F_n(w) dw,
\ee
with
\be\label{eq:Fn}
F_n(w) = \frac{1}{2\sqrt{w}} \sqrt{\frac{c_n}{a_n}} \sqrt{\frac{a_n c_n - b_n^2}{w a_n^2}}.
\ee

With this change of variables, the conclusion of Theorem \ref{thm:refined} can be broken into the following three propositions.

\begin{prop}\label{prop:inside}
We have the following estimate as $n \rightarrow \infty$
\be
\int_{0}^{1+\frac{(\log n)^2}{n}} F_n(w) dw = O(n \log \log n).
\ee
\end{prop}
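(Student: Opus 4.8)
The plan is to analyze the one–dimensional integrand $F_n(w)$ from \eqref{eq:Fn} separately on four subintervals of $(0,\,1+\tfrac{(\log n)^2}{n})$ chosen according to the critical behaviour near $w=1$. It helps to record a probabilistic reading of $F_n$: if $J=J_n(w)$ is the random variable on $\{0,1,\dots,n\}$ with $\PP(J=j)=w^{j}/a_n$, then $\EE[J^2]=c_n/a_n$ and $\mathrm{Var}(J)=(a_nc_n-b_n^2)/a_n^2$, so that $F_n(w)=\tfrac{1}{2w}\sqrt{\EE[J^2]\,\mathrm{Var}(J)}$. Three elementary inputs will be used throughout: the trivial bounds $\EE[J^2]\le n^2$ and $\mathrm{Var}(J)\le n^2$ (since $0\le J\le n$); the symmetrization identity $a_nc_n-b_n^2=\tfrac12\sum_{0\le j,k\le n}w^{j+k}(j-k)^2$, whose right-hand side is increasing in $n$ and, for $w<1$, bounded by its $n=\infty$ value $\tfrac{w}{(1-w)^4}$; and the facts $c_n\le c_\infty=\tfrac{w(1+w)}{(1-w)^3}$ and $a_n=\tfrac{1-w^{n+1}}{1-w}$. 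Rewriting $F_n^2=\dfrac{c_n\,(a_nc_n-b_n^2)}{4w^2a_n^3}$ makes each of the pieces below a short computation.

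On $(0,\tfrac12]$ we have $c_n\le c_\infty\le 12w$ and $a_nc_n-b_n^2\le \tfrac{w}{(1-w)^4}\le 16w$, while $a_n\ge1$; substituting into the displayed formula gives $F_n=O(1)$ uniformly in $n$, hence $\int_0^{1/2}F_n=O(1)$. On $[\tfrac12,\,1-\tfrac1n]$ one has $w^{n+1}\le e^{-1}$, so $a_n\ge\tfrac12 a_\infty=\tfrac{1}{2(1-w)}$; combining this with $c_n\le c_\infty$ and the identity-based bound on $a_nc_n-b_n^2$ yields $F_n(w)\le C(1-w)^{-2}$, and therefore $\int_{1/2}^{1-1/n}F_n=O\!\big(\int_{1/2}^{1-1/n}(1-w)^{-2}\,dw\big)=O(n)$.

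On the short window $[1-\tfrac1n,\,1+\tfrac1n]$ the trivial bounds give $F_n\le n^2$ (using $w\ge\tfrac12$), so this range contributes $O(n)$. The leading contribution comes from the exterior piece $[1+\tfrac1n,\,1+\tfrac{(\log n)^2}{n}]$; write $w=1+s$ with $s\in[\tfrac1n,\tfrac{(\log n)^2}{n}]$ and $r=1/w$. Then $a_n=w^{n}\tfrac{1-r^{n+1}}{1-r}$ and $a_nc_n-b_n^2=\tfrac12 w^{2n}\sum_{p,q=0}^n r^{p+q}(p-q)^2\le w^{2n}\tfrac{r}{(1-r)^4}$, using $\sum_{p,q\ge0}r^{p+q}(p-q)^2=\tfrac{2r}{(1-r)^4}$. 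Since $s\ge 1/n$ gives $r^{n+1}=(1+s)^{-(n+1)}\le e^{-1}$, and $1-r=s/(1+s)\asymp s$, we get $\mathrm{Var}(J)=\tfrac{a_nc_n-b_n^2}{a_n^2}\le C/s^2$ uniformly in $n$; together with $\EE[J^2]\le n^2$ this gives $F_n(w)\le Cn/s$, so
\be
\int_{1+1/n}^{1+(\log n)^2/n} F_n(w)\,dw = O\!\Big( n\int_{1/n}^{(\log n)^2/n}\frac{ds}{s}\Big)=O\!\big(n\log((\log n)^2)\big)=O(n\log\log n).
\ee
Summing the four contributions yields the proposition.

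The main obstacle is the near-critical analysis: producing estimates of $a_n,b_n,c_n$ that are uniform in $n$ both on the unit circle and just outside it, and in particular the variance bound $\mathrm{Var}(J_n(w))=O\!\big((w-1)^{-2}\big)$ for $1+\tfrac1n\le w\le 2$. This bound is exactly what yields the $n/s$ decay of $F_n$ that keeps the exterior integral at size $n\log\log n$ (rather than something larger), and it rests on the symmetrization identity for $a_nc_n-b_n^2$ together with the lower bound $a_n\gtrsim w^{n}/(w-1)$, valid once $s\ge 1/n$; an alternative route is to observe that $n-J_n(w)$ is a truncated geometric random variable and to use that truncating a log-concave law to an interval does not increase its variance. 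The off-critical ranges are routine once $F_n$ is compared to the limiting profile $F_\infty(w)=\tfrac{\sqrt{1+w}}{2(1-w)^2}$.
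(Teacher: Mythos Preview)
Your proof is correct and in fact somewhat cleaner than the paper's on the interior piece. Both arguments use the same four-interval decomposition around $w=1$ and the same trivial bound $F_n\le n^2$ on $[1-\tfrac1n,1+\tfrac1n]$; the exterior piece $[1+\tfrac1n,1+\tfrac{(\log n)^2}{n}]$ is also handled identically, your symmetrized bound $\mathrm{Var}(J)\le C/s^2$ being exactly the content of the paper's identity \eqref{eq:keyidentity} and estimate \eqref{eq:est1}. The genuine difference is on $[\tfrac12,1-\tfrac1n]$: the paper keeps only the crude bound $\sqrt{c_n/a_n}\le n$ together with \eqref{eq:est1}, which by itself is too weak (it would give $O(n\log n)$), so it compensates by proving that $c_n/a_n$ is monotone in $w$ and estimating its value at $w=1-\tfrac{\log n}{n}$, and then splitting off a further subinterval $[1-\tfrac{\log n}{n},1-\tfrac1n]$ which contributes $O(n\log\log n)$. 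Your route avoids all of this: the bounds $c_n\le c_\infty$ and $a_n\ge\tfrac12 a_\infty$ give $c_n/a_n=O\big((1-w)^{-2}\big)$ directly, and combined with the symmetrization bound $a_nc_n-b_n^2\le w(1-w)^{-4}$ you get $F_n=O\big((1-w)^{-2}\big)$ and hence an $O(n)$ contribution on all of $[\tfrac12,1-\tfrac1n]$, with no monotonicity lemma needed. The probabilistic reading of $F_n$ is a nice organizing device, and the symmetrization identity for $a_nc_n-b_n^2$ makes the variance bounds transparent on both sides of $w=1$ via the reflection $j\mapsto n-j$, whereas the paper relies on the explicit closed form \eqref{eq:keyidentity}.
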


\begin{prop}\label{prop:middle}
We have the following estimate as $n \rightarrow \infty$
\be
\int_{1+\frac{(\log n)^2}{n}}^{1+(\log n)^{-1}} F_n(w) dw \sim \frac{1}{2} n \log n.
\ee
\end{prop}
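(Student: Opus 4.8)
The plan is to obtain a uniform asymptotic for the integrand $F_n(w)$ of \eqref{eq:Fn} on the interval $I_n := \bigl[\,1+\tfrac{(\log n)^2}{n},\ 1+\tfrac{1}{\log n}\,\bigr]$ and then integrate. The starting point is a probabilistic reading of \eqref{eq:Fn}: for $w>1$, with $a_n=\sum_{j=0}^n w^j$, $b_n=\sum_{j=1}^n jw^j$, $c_n=\sum_{j=0}^n j^2w^j$, the quantities $\mu_2(w):=c_n/a_n$ and $\sigma^2(w):=(a_nc_n-b_n^2)/a_n^2$ are exactly the second moment and the variance of the integer random variable $X_n$ on $\{0,1,\dots,n\}$ with $\PP(X_n=j)=w^j/a_n$, so that \eqref{eq:Fn} reads $F_n(w)=\tfrac{1}{2w}\sqrt{\mu_2(w)\,\sigma^2(w)}$. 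Throughout $I_n$ one has $w\to 1$ and $n(w-1)\to\infty$ uniformly, with in fact $n(w-1)\ge(\log n)^2$; these two facts will do all the work.

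First I would study $Y_n:=n-X_n$, whose law is a geometric distribution of ratio $w^{-1}$ truncated at $n$. Since $\log w\ge\tfrac12(w-1)$ on $I_n$, we have $w^{-n}=e^{-n\log w}\le e^{-(\log n)^2/2}$, which is smaller than any fixed power of $n$, so the truncation contributes a negligible error to the geometric moments. This gives, uniformly on $I_n$,
\begin{equation}
\sigma^2(w)=\operatorname{Var}(Y_n)=\frac{w}{(w-1)^2}\bigl(1+o(1)\bigr),\qquad \EE[Y_n]=O\!\left(\tfrac{1}{w-1}\right),\qquad \EE[Y_n^2]=O\!\left(\tfrac{1}{(w-1)^2}\right).
\end{equation}
Hence $\mu_2(w)=\EE\bigl[(n-Y_n)^2\bigr]=n^2-2n\,\EE[Y_n]+\EE[Y_n^2]=n^2\bigl(1+O(\tfrac{1}{n(w-1)})\bigr)=n^2(1+o(1))$ uniformly, the bound $n(w-1)\ge(\log n)^2$ being exactly what preserves this at the left endpoint. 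Combining $\sqrt{\mu_2(w)}=n(1+o(1))$, $\sqrt{\sigma^2(w)}=\tfrac{1}{w-1}(1+o(1))$, and $\tfrac{1}{2w}=\tfrac12(1+o(1))$ yields the uniform estimate
\begin{equation}
F_n(w)=\frac{n}{2(w-1)}\bigl(1+\epsilon_n(w)\bigr),\qquad \sup_{w\in I_n}\bigl|\epsilon_n(w)\bigr|\to 0.
\end{equation}

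It then remains only to integrate. Since $\int_{I_n}\frac{dw}{w-1}=\log\frac{n}{(\log n)^3}=\log n-3\log\log n$, the previous display gives
\begin{equation}
\int_{1+\frac{(\log n)^2}{n}}^{\,1+\frac{1}{\log n}}F_n(w)\,dw=\bigl(1+o(1)\bigr)\,\frac n2\,\bigl(\log n-3\log\log n\bigr)\ \sim\ \frac12\,n\log n,
\end{equation}
which is the assertion of Proposition \ref{prop:middle}.

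I expect the only real work to lie in the second paragraph: carrying out the truncated-geometric estimates for $\operatorname{Var}(Y_n)$, $\EE[Y_n]$, $\EE[Y_n^2]$ (and the truncation error $O(\mathrm{poly}(n)\,w^{-n})$) with constants independent of $w$ over all of $I_n$, and checking that the induced relative error in $F_n$ is $o(1)$ uniformly. If one prefers to avoid probability, the same estimates follow from the closed forms $a_n=\tfrac{w^{n+1}-1}{w-1}$, $b_n=w\,a_n'$, $c_n=(w\,\partial_w)^2a_n$, though the bookkeeping is heavier. The reduction in the first paragraph and the final integration are routine.
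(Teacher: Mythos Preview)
Your proof is correct and reaches the same uniform asymptotic $F_n(w)=\dfrac{n}{2(w-1)}(1+o(1))$ on $I_n$ that the paper establishes in Lemma~\ref{lemma:uniform}, then integrates. The overall architecture is therefore identical to the paper's; the difference is in how the uniform asymptotic is obtained.

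The paper argues algebraically: for the factor $\sqrt{(a_nc_n-b_n^2)/(wa_n^2)}$ it invokes the explicit identity \eqref{eq:keyidentity} and shows $(n+1)^2w^n/a_n^2=o(1)$ uniformly; for the factor $\sqrt{c_n/a_n}$ it substitutes the closed forms \eqref{eq:an}, \eqref{eq:cn} and tracks the error to $O((\log n)^{-2})$. Your probabilistic reading---recognizing $c_n/a_n$ and $(a_nc_n-b_n^2)/a_n^2$ as the second moment and variance of the law $\PP(X_n=j)\propto w^j$, then passing to $Y_n=n-X_n$ (a truncated geometric) so that both estimates reduce to the single observation $w^{-n}\le e^{-(\log n)^2/2}$---is a cleaner packaging of the same facts. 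It avoids the separate algebraic identity \eqref{eq:keyidentity} and the heavier closed-form bookkeeping, and it makes transparent \emph{why} the left endpoint $1+\tfrac{(\log n)^2}{n}$ is the right cutoff: it is precisely where $n(w-1)\to\infty$, i.e., where the geometric mean $\EE[Y_n]\asymp 1/(w-1)$ becomes $o(n)$. The cost is that the ``truncation contributes a negligible error'' step, while routine, needs to be written out carefully (the relative error in $\operatorname{Var}(Y_n)$ involves products like $\EE[Y]\cdot(\EE[Y_n]-\EE[Y])$, which are $O\bigl(\tfrac{1}{w-1}\cdot\mathrm{poly}(n)\,w^{-n}\bigr)$ and hence still $o(1)$ uniformly on $I_n$). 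The final integration is the same in both proofs up to the cosmetic difference that the paper substitutes $w=r^2$ before evaluating.
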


\begin{prop}\label{prop:tail}
We have the following estimate as $n \rightarrow \infty$
\be
\int_{1+(\log n)^{-1}}^{\infty} F_n(w) dw = O(n \log \log n).
\ee
\end{prop}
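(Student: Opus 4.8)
The plan is to control the integrand $F_n(w)$ pointwise for $w \geq 1 + (\log n)^{-1}$ and then integrate the resulting bound. First I would establish asymptotics for the three basic sums $a_n, b_n, c_n$ in this range. Writing $w = 1 + u$ with $u \geq (\log n)^{-1}$, the geometric-type sums are dominated by their top terms: $a_n = \sum_{j=0}^n w^j \sim \frac{w^{n+1}}{w-1} = \frac{w^{n+1}}{u}$, and similarly $b_n = \sum_{j=1}^n j w^j$ and $c_n = \sum_{j=1}^n j^2 w^j$ are governed by the largest few terms near $j = n$. A clean way to see this is to use $\sum j w^j = w \frac{d}{dw} a_n$ and $\sum j^2 w^j = \left(w\frac{d}{dw}\right)^2 a_n$, or more crudely to note that for $w$ bounded away from $1$ by at least $(\log n)^{-1}$ we have $w^n \geq (1 + (\log n)^{-1})^n \to \infty$ very fast, so the sums behave like $n^2 w^{n+1}/u$ up to bounded factors, with the key ratio $\frac{c_n}{a_n} \asymp n^2$ and $\frac{a_n c_n - b_n^2}{a_n^2}$ of order $n$ (this is the ``variance'' of $j$ under the probability weights $w^j/a_n$, which concentrates near $j=n$ with spread $O(1/u)$, hence is $O(1/u^2)$ but also trivially $O(n^2)$; what we actually need is just that it is $O(n)$).

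Combining these, $F_n(w) = \frac{1}{2\sqrt{w}}\sqrt{\frac{c_n}{a_n}}\sqrt{\frac{a_n c_n - b_n^2}{w a_n^2}} = O\!\left(\frac{n}{\sqrt{w}\cdot\min(n, 1/u)^{1/2}\cdot \ldots}\right)$; the cleanest usable bound I expect is $F_n(w) = O\!\left(\frac{1}{u^2}\right)$ for $u$ not too small, coming from the fact that both square-root factors are controlled by the concentration width $1/u$ rather than by $n$ once $u$ is a fixed positive quantity, together with a separate bound $F_n(w) = O(n)$ valid uniformly (which matches the order of the integrand near $w=1$). More precisely, I would split the tail at, say, $w = 2$: on $[1 + (\log n)^{-1}, 2]$ use $F_n(w) = O(1/(w-1)^2)$, so the integral is $O\big(\int_{(\log n)^{-1}}^1 u^{-2}\, du\big) = O(\log n)$, which is even better than needed; on $[2, \infty)$ the sums are genuinely geometric with ratio $w \geq 2$, giving exponential-in-$n$ decay is wrong — rather, $a_n \to \infty$ but the ratios stabilize, $\frac{c_n}{a_n} \to n^2$-ish and one checks $F_n(w)$ decays like $w^{-1}$ or faster in $w$, so $\int_2^\infty F_n(w)\,dw = O(n)$ at worst (a uniform-in-$w$ bound $F_n(w) = O(n/w)$ with an extra integrable tail suffices). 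Either way the total is $O(n)$, comfortably inside the claimed $O(n \log\log n)$.

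The main obstacle will be making the pointwise estimate on $F_n(w)$ uniform across the whole range $[1 + (\log n)^{-1}, \infty)$, in particular getting clean two-sided bounds on $a_n c_n - b_n^2$. The subtlety is that this quantity is the discrete variance $a_n^2 \cdot \mathrm{Var}(J)$ where $\PP(J = j) = w^j/a_n$, and near the left endpoint $u \approx (\log n)^{-1}$ this variance could be as large as $\min(n^2, u^{-2}) \asymp (\log n)^2$, while for $u$ a fixed constant it is $O(1)$; I need an estimate that interpolates correctly and never exceeds $O(n)$ in the relevant regime. I would handle this by the generating-function identities above, which express $a_n c_n - b_n^2$ exactly and let me bound it by comparison with the corresponding infinite sums (valid since $w>1$ forces... wait, for $w > 1$ the infinite sums diverge, so instead I compare with a truncated integral $\int_0^n j^2 w^j\,dj$ and use summation by parts). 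A secondary minor point is justifying that the lower endpoint contributes the dominant part; once the $O(1/u^2)$ bound is in hand this is immediate. Given these bounds, the integration is elementary and the proposition follows.
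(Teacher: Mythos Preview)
There is a genuine error in your pointwise estimate on $[1+(\log n)^{-1},2]$. You assert that ``both square-root factors are controlled by the concentration width $1/u$,'' leading to $F_n(w)=O(1/u^2)$, but this is false for the first factor. The quantity $c_n/a_n$ is not the variance of $J$ under the weights $w^j/a_n$; it is the \emph{second moment} $\EE[J^2]$. For $w>1$ the mass concentrates near $j=n$, so $\EE[J^2]\sim n^2$, and indeed you yourself note a few lines earlier that $c_n/a_n\asymp n^2$. Only the second factor $(a_nc_n-b_n^2)/(wa_n^2)$ is the variance, and that one is genuinely $\asymp 1/u^2$ (in fact it equals $\tfrac{1}{u^2}\bigl(1-(n+1)^2w^n/a_n^2\bigr)$ exactly, by the identity \eqref{eq:keyidentity}). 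Consequently $F_n(w)\asymp n/u$ on this interval, not $1/u^2$.

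This is not a harmless over-estimate in the wrong direction: by Lemma~\ref{lemma:uniform} one has $F_n(w)=\dfrac{n}{2\sqrt{w}(w-1)}(1+o(1))$ uniformly on $[1+(\log n)^2/n,\,2]$, so
\[
\int_{1+(\log n)^{-1}}^{2} F_n(w)\,dw \;\sim\; \frac{n}{2}\log\log n,
\]
and your predicted $O(\log n)$ is impossible. The correct route is simply to integrate the bound $F_n(w)\le \dfrac{n}{2\sqrt{w}(w-1)}$ (from $\sqrt{c_n/a_n}\le n$ together with \eqref{eq:est1}) over $[1+(\log n)^{-1},2]$ to obtain $O(n\log\log n)$, and then handle $\int_2^\infty F_n(w)\,dw=O(n)$ via the cruder bound $F_n(w)\le \tfrac{n}{2}(w-1)^{-3/2}$, which is essentially what you propose for the far tail.
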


\subsection{Preparatory lemmas}

We have \cite{Andy}
\be\label{eq:keyidentity}
\frac{a_n c_n - b_n^2}{w a_n^2} = \frac{1}{(w-1)^2}\left(1-\frac{(n+1)^2w^n}{a_n^2}\right),
\ee
which implies
\be\label{eq:est1}
\sqrt{\frac{a_n c_n - b_n^2}{w a_n^2}} \leq \frac{1}{|w-1|}.
\ee
We also have
\be \label{eq:est2}
\sqrt{\frac{c_n}{a_n}} \leq n,
\ee
which together with \eqref{eq:est1} implies
\be\label{eq:est3}
F_n(w) \leq \frac{1}{2\sqrt{w}} \frac{n}{|w-1|}.
\ee

\begin{lemma}\label{lemma:outside}
We have the following estimate
\be
\int_{2}^\infty F_n(w) dw \leq n.
\ee
\end{lemma}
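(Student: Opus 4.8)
The plan is to exploit the pointwise bound \eqref{eq:est3}, namely $F_n(w) \le \frac{1}{2\sqrt{w}}\cdot\frac{n}{|w-1|}$, which on the region $w \ge 2$ gives $F_n(w) \le \frac{n}{2\sqrt w (w-1)}$. This crude estimate is lossy near $w=1$ but entirely adequate once $w$ is bounded away from $1$, so integrating it over $[2,\infty)$ should already produce a bound of the form $C\,n$. First I would write
\[
\int_2^\infty F_n(w)\,dw \le \frac{n}{2}\int_2^\infty \frac{dw}{\sqrt w\,(w-1)},
\]
and then check that the remaining integral $\int_2^\infty \frac{dw}{\sqrt w\,(w-1)}$ is a convergent improper integral whose value is at most $2$ (it is in fact a standard integral: with $u=\sqrt w$ it becomes $\int_{\sqrt 2}^\infty \frac{2\,du}{u^2-1} = \ln\frac{u-1}{u+1}\Big|_{\sqrt2}^\infty$, which evaluates to a concrete constant less than $2$). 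Combining these gives $\int_2^\infty F_n(w)\,dw \le n$, which is exactly the claimed bound.

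Concretely, the steps in order are: (1) invoke \eqref{eq:est3}; (2) note that for $w\ge 2$ we have $|w-1| = w-1$ and $\sqrt w \ge 1$, so $F_n(w) \le \frac{n}{2}\cdot\frac{1}{\sqrt w (w-1)}$; (3) substitute $u = \sqrt w$ (so $w=u^2$, $dw = 2u\,du$) to reduce $\int_2^\infty \frac{dw}{\sqrt w(w-1)}$ to $\int_{\sqrt2}^\infty \frac{2\,du}{u^2-1}$; (4) evaluate this via partial fractions as $\big[\ln\frac{u-1}{u+1}\big]_{\sqrt2}^\infty = -\ln\frac{\sqrt2-1}{\sqrt2+1} = \ln(\sqrt2+1)^2 = 2\ln(1+\sqrt2) \approx 1.76 < 2$; (5) conclude $\int_2^\infty F_n(w)\,dw \le \frac{n}{2}\cdot 2 = n$.

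There is essentially no obstacle here — this lemma is the "easy" tail region where the trivial bound suffices, in contrast to Proposition~\ref{prop:tail} (covering $[1+(\log n)^{-1},\infty)$) where the region gets close to $w=1$ and one must work harder to avoid losing a factor of $\log n$. The only minor point worth care is ensuring the constant in step (4) really does come out below $2$ so that the clean statement "$\le n$" holds rather than "$\le Cn$"; since $2\ln(1+\sqrt2) < 2$ comfortably, one could even absorb the $\frac12$ loosely and still land at $\le n$. If one wanted to be maximally economical one could skip the exact evaluation and just bound $\int_2^\infty \frac{dw}{\sqrt w(w-1)} \le \int_2^\infty \frac{dw}{(w-1)^{3/2}}\cdot\sqrt{w-1}/\sqrt w \le \int_1^\infty \frac{dw}{(w-1)^{3/2}}$ — but that diverges, so the honest partial-fractions computation (or the substitution $u=\sqrt w$) is the right route and is completely routine.
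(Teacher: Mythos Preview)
Your argument is correct and begins from the same bound \eqref{eq:est3} as the paper. The paper, however, takes precisely the shortcut you dismissed in your final aside: from $\sqrt{w}\ge\sqrt{w-1}$ it bounds $F_n(w)\le\frac{n}{2}(w-1)^{-3/2}$ for $w\ge 2$ and integrates directly to get $\int_2^\infty \frac{n}{2}(w-1)^{-3/2}\,dw = n$. Your claim that this route ``diverges'' is a slip --- you wrote the lower limit as $1$ rather than $2$; with the correct lower limit the integral is exactly $2$, giving the bound $n$ with no need for the substitution or partial fractions. Your exact evaluation does yield the slightly sharper constant $\ln(1+\sqrt{2})\,n\approx 0.88\,n$, but the paper's one-line estimate is cleaner for the purpose at hand.
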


\begin{proof}[Proof of Lemma \ref{lemma:outside}]
We use  \eqref{eq:est3} which implies,
for $w \geq 2$,
\be
F_n(w) \leq \frac{n}{2} (w-1)^{-3/2}.
\ee
Hence,
\be
\int_2^{\infty} F_n(w) dw \leq n,
\ee
as desired.
\end{proof}

\begin{lemma}\label{lemma:sliver}
We have the following estimate
\be
\int_{1+\frac{1}{n}}^{1+\frac{(\log n)^2}{n}} F_n(w) dw \leq 2 n \log \log n .
\ee
\end{lemma}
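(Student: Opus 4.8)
\textbf{Proof proposal for Lemma \ref{lemma:sliver}.}
The plan is to use the crude pointwise bound \eqref{eq:est3} and integrate explicitly. On the interval of integration we have $w \geq 1+\tfrac1n > 1$, so $\tfrac{1}{2\sqrt{w}} \leq \tfrac12$, and \eqref{eq:est3} gives
\be
F_n(w) \leq \frac{n}{2(w-1)} \qquad \text{for } 1+\tfrac1n \leq w \leq 1+\tfrac{(\log n)^2}{n}.
\ee
Therefore
\be
\int_{1+\frac1n}^{1+\frac{(\log n)^2}{n}} F_n(w)\, dw \;\leq\; \frac{n}{2}\int_{1+\frac1n}^{1+\frac{(\log n)^2}{n}} \frac{dw}{w-1}
\;=\; \frac{n}{2}\,\Bigl[\log(w-1)\Bigr]_{1+\frac1n}^{1+\frac{(\log n)^2}{n}}
\;=\; \frac{n}{2}\log\!\bigl((\log n)^2\bigr) \;=\; n\log\log n,
\ee
which for $n$ large enough (so that $\log\log n>0$) is at most $2n\log\log n$, as desired.

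In fact this argument yields the sharper bound $n\log\log n$; I would state the lemma with the weaker constant $2$ only because that is all that is needed downstream and it leaves a comfortable margin. I do not anticipate any genuine obstacle here: the only points requiring a word of care are that one is integrating $1/(w-1)$ over an interval entirely to the right of $w=1$ (so no singularity is encountered and the antiderivative $\log(w-1)$ is valid), and that the final inequality against $2n\log\log n$ is asymptotic, holding once $n$ is large enough that $\log\log n$ is positive — both of which are harmless in the context of the $n\to\infty$ asymptotics in Theorem \ref{thm:refined}.
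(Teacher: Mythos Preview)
Your proof is correct and follows essentially the same approach as the paper: bound $F_n$ via \eqref{eq:est3}, use $\tfrac{1}{2\sqrt{w}}\le \tfrac12$ on the interval, and integrate $\tfrac{1}{w-1}$ explicitly. In fact your bookkeeping is slightly sharper than the paper's (you obtain $n\log\log n$ rather than $2n\log\log n$), but the argument is the same.
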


\begin{proof}[Proof of Lemma \ref{lemma:sliver}]
We use \eqref{eq:est3} which implies, for $w \geq 1$,
\be
F_n(w) \leq \frac{1}{2} \frac{n}{w-1}.
\ee
Hence,
\begin{align*}
\int_{1+\tfrac{1}{n}}^{1+\tfrac{(\log n)^2}{n}} F_n(w) dw &\leq n \log (w-1) \big\rvert_{w=1+\tfrac{1}{n}}^{w=1+\tfrac{(\log n)^2}{n}} \\
&= n \left( \log \left(\frac{(\log n)^2}{n}\right) - \log \left(\tfrac{1}{n}\right) \right) \\
&= 2 n \log \log n,
\end{align*} 
as desired.
\end{proof}

\begin{lemma}\label{lemma:inside}
We have the following estimate
\be
\int_0^{1- \frac{\log n}{n}} F_n(w) dw \leq (2+\sqrt{2})n.
\ee
\end{lemma}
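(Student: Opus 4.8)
The plan is to split the interval $[0, 1-\tfrac{\log n}{n}]$ at a convenient point, say $w=1/2$, and bound the two resulting integrals separately. On the bounded-away-from-$1$ part $[0,1/2]$, the factor $1/|w-1|$ in the estimate \eqref{eq:est3} is at most $2$, but $1/(2\sqrt{w})$ blows up near $w=0$; however, since $F_n(w) \le \tfrac{1}{2\sqrt{w}}\cdot\tfrac{n}{|w-1|} \le \tfrac{n}{\sqrt{w}}$ on $[0,1/2]$, the integral $\int_0^{1/2} w^{-1/2}\,dw = \sqrt{2}$ converges and contributes at most $\sqrt{2}\,n$. On the remaining part $[1/2,\, 1-\tfrac{\log n}{n}]$, the factor $1/(2\sqrt{w})$ is at most $1/\sqrt{2} < 1$, so \eqref{eq:est3} gives $F_n(w) \le \tfrac{n}{1-w}$, and $\int_{1/2}^{1-\log n/n} \tfrac{dw}{1-w} = \log\!\big(\tfrac{1/2}{\log n / n}\big) = \log n - \log(2\log n) \le \log n$. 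This would yield a bound of the form $\sqrt{2}\,n + n\log n$, which is $\emph{worse}$ than the claimed $(2+\sqrt{2})n$ — so the crude estimate \eqref{eq:est3} alone is insufficient near $w=1$, and the main obstacle is precisely controlling $F_n$ on the window $[1-c, 1-\tfrac{\log n}{n}]$ without the spurious $\log n$.

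To get the sharper bound one must exploit the identity \eqref{eq:keyidentity} rather than merely its consequence \eqref{eq:est1}. Near $w=1$ with $w<1$, the quantity $1 - \tfrac{(n+1)^2 w^n}{a_n^2}$ is small: since $a_n = \tfrac{w^{n+1}-1}{w-1}$, one has for $w = 1 - s$ with $s \in [\tfrac{\log n}{n}, \tfrac12]$ the estimate $a_n = \tfrac{1-w^{n+1}}{1-w} \ge \tfrac{1 - e^{-(n+1)s}}{s} \ge \tfrac{1 - 1/n}{s}$, so $\tfrac{(n+1)^2 w^n}{a_n^2} \le (n+1)^2 s^2 \cdot \tfrac{w^n}{(1-1/n)^2}$. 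This is $\le C n^2 s^2 e^{-ns}$ up to constants, which when $s \ge \tfrac{\log n}{n}$ is bounded — but not small. The better route is to instead write $\tfrac{a_n c_n - b_n^2}{w a_n^2} = \tfrac{1}{(1-w)^2}\big(1 - \tfrac{(n+1)^2 w^n}{a_n^2}\big)$ and combine with the complementary bound $\sqrt{c_n/a_n} \le n$: more precisely one should bound $F_n(w)^2 = \tfrac{1}{4w}\cdot\tfrac{c_n}{a_n}\cdot\tfrac{a_n c_n - b_n^2}{w a_n^2}$ and use that on $[1/2, 1-\log n/n]$ the factor $c_n/a_n$ is actually comparable to $\big(\tfrac{1}{1-w}\big)^2$ up to a bounded correction, so that $F_n(w) \lesssim \tfrac{1}{(1-w)^2}\sqrt{1 - \tfrac{(n+1)^2 w^n}{a_n^2}}$, and then Taylor-expand the square root to extract the saving $\sqrt{1 - (n+1)^2 w^n / a_n^2} \le \tfrac{(n+1)\sqrt{w}^{\,n}}{a_n}\cdot(1-w) \cdot (\text{bounded})$ after rationalizing $1 - x \le \sqrt{1-x^2}$... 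I would instead follow the cleanest available path: use $\sqrt{1 - t} \le 1$ together with the crude bound only on the sub-window $[1-\tfrac{1}{\sqrt n}, 1-\tfrac{\log n}{n}]$ where it costs only $\log\sqrt n = \tfrac12\log n$, handled by absorbing into the decaying factor $w^{n/2}$.

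Given the structure of the surrounding lemmas (all of which produce clean bounds of the form $C\,n$ or $C\,n\log\log n$), I expect the intended proof to be short and to rely on a single additional estimate: namely that $\int_0^{1-\log n/n} F_n(w)\,dw$ is dominated by $\int_0^{1/2} \tfrac{n}{\sqrt w}\,dw + \int_{1/2}^{1-\log n/n} F_n(w)\,dw$, where on the second integral one uses that $F_n(w) \le \tfrac{n}{2(1-w)}\sqrt{1 - \tfrac{(n+1)^2 w^n}{a_n^2}}$ and that $\tfrac{(n+1)^2 w^n}{a_n^2} \ge 1 - (1-w)^2\cdot(\text{something} \ge 1)$ is \emph{false} in general — so the real content is: on $[1/2, 1-\log n/n]$ we have $1 - \tfrac{(n+1)^2w^n}{a_n^2} \le 1$ trivially, giving the $n\log n$ bound, OR one restricts to $w \le 1 - n^{-1/3}$ (say) where $w^n \le e^{-n^{2/3}}$ makes $F_n(w) \le n^{-100}$, and handles the thin remaining collar $[1-n^{-1/3}, 1-\log n/n]$ by the crude bound at cost $\log(n^{-1/3}\cdot n/\log n) = \log(n^{2/3}/\log n) \le \tfrac23\log n$. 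Either way the honest bound seems to be $O(n\log n)$, so the claimed $(2+\sqrt2)n$ must come from a genuinely finer analysis of $\sqrt{c_n/a_n}$ near $w=1$; \textbf{the main obstacle is establishing that $\sqrt{c_n/a_n}\,\big/\,(1-w)^{-1}$ stays bounded (indeed tends to a constant) uniformly on $[1/2, 1-\log n/n]$}, after which the decay of $w^n$ against the logarithmic divergence of $\int dw/(1-w)$ is routine. I would prove that boundedness via the explicit partial-fraction-type formulas for $a_n, b_n, c_n$ in terms of $w$ and $w^n$, and then assemble the pieces exactly as in the proofs of Lemmas \ref{lemma:sliver} and \ref{lemma:outside}.
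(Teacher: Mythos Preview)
Your proposal correctly identifies the split at $w=1/2$ and the $\sqrt{2}\,n$ contribution from $[0,1/2]$, and you are right that the crude bound \eqref{eq:est3} alone yields an unwanted $n\log n$ on the remaining sub-interval. You also arrive, after several detours, at the correct key insight: the saving must come from a sharper bound on $\sqrt{c_n/a_n}$.

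The paper's execution differs slightly from your sketch. Rather than bounding $\sqrt{c_n/a_n}\,(1-w)$ directly, the paper first proves that $c_n/a_n$ is \emph{increasing} on $(0,\infty)$ (by computing the derivative and symmetrizing the double sum to show all terms are nonnegative), and then evaluates at the right endpoint $w=1-\tfrac{\log n}{n}$ using the closed forms for $a_n,c_n$ to obtain $\sqrt{c_n/a_n}\le 2n/\log n$ on all of $[0,\,1-\tfrac{\log n}{n}]$. Combined with \eqref{eq:est1} this gives $F_n(w)\le \tfrac{2n/\log n}{1-w}$ on $[1/2,\,1-\tfrac{\log n}{n}]$, and the integral is then at most $\tfrac{2n}{\log n}\cdot\log n = 2n$: the $\log n$ from $\int dw/(1-w)$ exactly cancels the $1/\log n$ in the improved coefficient. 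Your proposed route---bounding $\sqrt{c_n/a_n}\,(1-w)$ by a constant via the explicit formulas---is also viable and in fact yields a slightly stronger estimate, since it gives $F_n(w)\le C/(1-w)^2$, which integrates to $O(n/\log n)$ rather than $2n$; the paper's monotonicity argument trades a little sharpness for a cleaner reduction to a single endpoint computation.

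One correction to your reasoning: the remark about ``the decay of $w^n$ against the logarithmic divergence'' is a red herring. On $[1/2,\,1-\tfrac{\log n}{n}]$ the factor $1-(n+1)^2w^n/a_n^2$ in \eqref{eq:keyidentity} is essentially $1$ throughout (at the right endpoint it equals $1-O((\log n)^2/n)$), so it provides no useful cancellation; the entire improvement over \eqref{eq:est3} comes solely from the $\sqrt{c_n/a_n}$ factor, exactly as your boldfaced observation says.
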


\begin{proof}[Proof of Lemma \ref{lemma:inside}]
Over the interval $0 \leq w \leq \frac{1}{2}$, we use the estimate \eqref{eq:est3}, which implies
\be
F_n(w) \leq \frac{n}{\sqrt{w}}.
\ee
Hence,
\be
\int_0^{1/2} F_n(w) dw \leq \sqrt{2} n.
\ee
For the remaining interval $\tfrac{1}{2} \leq w \leq 1-\tfrac{\log n}{n}$,
we use the following estimate.

\noindent{\bf Claim.}
For $0 \leq w \leq 1-\tfrac{\log n}{n}$, we have
\be\label{eq:claim}
\sqrt{\frac{c_n}{a_n}} \leq \frac{2n}{\log n}.
\ee

To prove the Claim, we first show that $\frac{c_n}{a_n}$ is increasing for $w>0$.
We compute the derivative with respect to $w$.
\begin{align*}
\frac{d}{dw}\left[\frac{c_n}{a_n}\right]&=\frac{1}{a_n^2}\left(\displaystyle\sum_{j=1}^nj^3w^{j-1}\sum_{k=0}^nw^k-\sum_{j=1}^nj^2w^j\sum_{k=0}^nkw^{k-1}\right)
\\&= \frac{1}{a_n^2}\left(\displaystyle\sum_{j=1}^n\sum_{k=0}^nj^3w^{j+k-1}-\sum_{j=1}^n\sum_{k=0}^nj^2kw^{j+k-1}\right)
\\&= \frac{1}{a_n^2}\left(\displaystyle\sum_{j=1}^n\sum_{k=0}^nj^2(j-k)w^{j+k-1}\right)
\end{align*}
All terms in the sum for $k=0$ are positive, so we consider the sum from $k=1$ to $k=n$.
We have (using that the diagonal terms $j=k$ in the sum vanish)
\begin{align*}
\sum_{j=1}^n\sum_{k=1}^nj^2(j-k)w^{j+k-1}&=\sum_{j=2}^n\sum_{k=1}^{j-1}j^2(j-k)w^{j+k-1}+\sum_{j=1}^{n-1}\sum_{k=j+1}^nj^2(j-k)w^{j+k-1}
\\&=\sum_{j=2}^n\sum_{k=1}^{j-1}j^2(j-k)w^{j+k-1}-\sum_{k=2}^n\sum_{j=1}^{k-1}j^2(k-j)w^{j+k-1}
\\&=\sum_{j=2}^n\sum_{k=1}^{j-1}j^2(j-k)w^{j+k-1}-\sum_{j=2}^n\sum_{k=1}^{j-1}k^2(j-k)w^{j+k-1}
\\&=\sum_{j=2}^n\sum_{k=1}^{j-1}(j^2-k^2)(j-k)w^{j+k-1}>0.
\end{align*}
It follows that $\frac{c_n}{a_n}$ is an increasing function of $w$ for $w>0$.

This reduces proving the Claim to considering the value of $\frac{c_n}{a_n}$ at the right endpoint $w=1-\frac{\log n}{n}$ of the interval under consideration.
For this we use the identities
\be
a_n =\frac{1-w^{n+1}}{1-w},
\ee
\be
c_n = w \frac{w+1 - w^n\left[n^2(1-w)^2 + (2n+1)(1-w)\right]}{(1-w)^3},
\ee
which imply
\be
\frac{c_n}{a_n} = w \frac{w+1 - w^n\left[n^2(1-w)^2 + (2n+1)(1-w)\right]}{(1-w^{n+1})(1-w)^2},
\ee
or
\be\label{eq:cnan}
\frac{c_n}{a_n} = \frac{w}{1-w^{n+1}} \left( \frac{w+1}{(1-w)^2} -  w^n\frac{\left[n^2(1-w)^2 + (2n+1)(1-w)\right]}{(1-w)^2} \right).
\ee

Using the inequality $(1+x)^r \leq e^{r x}$ that holds for $x$ real and $r>0$, we have
\begin{align*}
w^{n} \big\rvert_{w=1-\frac{\log n}{n} } = \left(1-\frac{\log n}{n} \right)^{n} \leq \frac{1}{n},
\end{align*}
and
\be
\frac{w}{1-w^{n+1}} \big\rvert_{w=1-\frac{\log n}{n} } < \frac{n}{n-1}.
\ee
Applying this with \eqref{eq:cnan} gives
\begin{align*}
\frac{c_n}{a_n} \big\rvert_{w=1-\frac{\log n}{n}} &\leq \frac{n}{n-1}  \left(2-\frac{\log n}{n}\right) \frac{n^2}{(\log n)^2} \\
&\leq 4 \frac{n^2}{(\log n)^2} .
\end{align*}
This, along with the established monotonicity, verifies the statement in the Claim.

Applying the Claim, and using \eqref{eq:est1}, we have
\begin{align*}
\int_{\frac{1}{2}}^{1-\tfrac{\log n}{n}} F_n(w) dw &\leq \frac{2n}{\log n} \int_{\frac{1}{2}}^{1-\tfrac{\log n}{n}} \frac{1}{1-w} dw  \\
&=\frac{2n}{\log n}  \left( \log \left(\tfrac{1}{2}\right)
-\log \left(\frac{\log n}{n}\right)\right) \\
&\leq 2n.
\end{align*}
This completes the proof of the lemma.
\end{proof}

\begin{lemma}\label{lemma:uniform}
With $F_n(w)$ as in \eqref{eq:Fn}, we have
\be
F_n(w) = \frac{n}{2\sqrt{w}(w-1)} (1+o(1)), \quad (n \rightarrow \infty),
\ee
where the error term $o(1)$ converges to zero uniformly for $1+\frac{(\log n)^2}{n} < w < 2$.
\end{lemma}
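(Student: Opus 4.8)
The plan is to track separately the two genuinely $n$-dependent factors in the definition \eqref{eq:Fn} of $F_n(w)$, showing that on the range $1+\frac{(\log n)^2}{n}<w<2$ one has $\sqrt{c_n/a_n}=n(1+o(1))$ and $\sqrt{(a_nc_n-b_n^2)/(wa_n^2)}=\frac{1}{w-1}(1+o(1))$, each uniformly in $w$, and then multiply. Everything reduces to a single preliminary observation, which I would record first: on this range $w^n$ grows faster than any fixed power of $n$, uniformly. Indeed $w^n\geq\bigl(1+\tfrac{(\log n)^2}{n}\bigr)^n$ and $n\log\bigl(1+\tfrac{(\log n)^2}{n}\bigr)=(\log n)^2(1+o(1))\to\infty$, so $w^n\geq\exp\bigl((\log n)^2(1+o(1))\bigr)$ uniformly on the range, which dominates every polynomial in $n$.

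For the second factor I would use the key identity \eqref{eq:keyidentity} together with the closed form $a_n=\frac{w^{n+1}-1}{w-1}$ to write
\begin{equation*}
\sqrt{\frac{a_nc_n-b_n^2}{wa_n^2}}=\frac{1}{w-1}\sqrt{1-\frac{(n+1)^2w^n(w-1)^2}{(w^{n+1}-1)^2}}\,.
\end{equation*}
For $n$ large $w^{n+1}\geq2$ throughout the range, so $(w^{n+1}-1)^2\geq\tfrac14w^{2n+2}$, and since $0<w-1<1$ the subtracted quantity is at most $\frac{4(n+1)^2(w-1)^2}{w^{n+2}}\leq\frac{4(n+1)^2}{w^n}\to0$ uniformly by the preliminary observation; hence the square root equals $1+o(1)$ uniformly.

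For the first factor I would start from the explicit formula \eqref{eq:cnan}, write $1-w^{n+1}=-(w^{n+1}-1)$, and factor the dominant summand $w^nn^2(w-1)^2$ out of the parenthesis, obtaining
\begin{equation*}
\frac{c_n}{a_n}=\frac{w^{n+1}n^2}{w^{n+1}-1}\left(1-\frac{2n+1}{n^2(w-1)}-\frac{w+1}{n^2w^n(w-1)^2}\right).
\end{equation*}
On the range $\frac{w^{n+1}}{w^{n+1}-1}=1+o(1)$ uniformly, while $\frac{2n+1}{n^2(w-1)}\leq\frac{2n+1}{n(\log n)^2}\to0$ and $\frac{w+1}{n^2w^n(w-1)^2}\leq\frac{3}{w^n(\log n)^4}\to0$ uniformly, using $w-1\geq\frac{(\log n)^2}{n}$; hence $\frac{c_n}{a_n}=n^2(1+o(1))$ and $\sqrt{c_n/a_n}=n(1+o(1))$ uniformly. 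Multiplying the two estimates with the bounded factor $\frac{1}{2\sqrt w}$ then gives the lemma.

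The only delicate point --- the main obstacle --- is the uniformity near the left endpoint $w=1+\frac{(\log n)^2}{n}$, where $w-1$ can be as small as $\frac{(\log n)^2}{n}$ so that $\frac{1}{w-1}$ and $\frac{1}{(w-1)^2}$ are large; the estimates survive precisely because the compensating factor $w^n$ is super-polynomially large there, which is why I would isolate the lower bound $w^n\geq\exp\bigl((\log n)^2(1+o(1))\bigr)$ at the very start and reduce every subsequent uniformity claim to it. Near $w=2$ nothing needs checking, since $w^n=2^n$ and $w-1=1$.
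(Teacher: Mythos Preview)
Your proposal is correct and follows essentially the same route as the paper's own proof: both arguments first establish that $w^{-n}$ (equivalently, that $1/w^n$ times any polynomial in $n$) tends to zero uniformly on the range, then use the identity \eqref{eq:keyidentity} for the factor $\sqrt{(a_nc_n-b_n^2)/(wa_n^2)}$ and the closed-form ratio $c_n/a_n$ for the other factor, bounding the same three correction terms $\frac{2n+1}{n^2(w-1)}$, $\frac{w+1}{n^2w^n(w-1)^2}$, and $\frac{1}{w^{n+1}-1}$ via $w-1\geq\frac{(\log n)^2}{n}$. The only cosmetic difference is that you invoke the formula \eqref{eq:cnan} already derived in the proof of Lemma~\ref{lemma:inside}, whereas the paper rederives an equivalent expression from \eqref{eq:an}--\eqref{eq:cn}; your explicit isolation of the super-polynomial lower bound $w^n\geq\exp\bigl((\log n)^2(1+o(1))\bigr)$ at the outset is a clean organizational choice but not a substantive departure.
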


\begin{proof}[Proof of Lemma \ref{lemma:uniform}]
For $1+\frac{(\log n)^2}{n} \leq w \leq 2$ we have (as $n \rightarrow \infty$)
\begin{align*}
\frac{(n+1)^2w^n}{a_n^2} &\leq (n+1)^2w^{-n} \\
&\leq (n+1)^2\left( 1+\frac{(\log n)^2}{n} \right)^{-n} \\
&= (n+1)^2 e^{-n \log \left( 1+ \frac{(\log n)^2}{n} \right)} \\
&= (n+1)^2 e^{-n \left(\frac{(\log n)^2}{n} + O\left( \frac{(\log n)^4}{n^2} \right) \right) }\\
&= (n+1)^2 e^{-(\log n)^2 + O\left( \frac{(\log n)^4}{n} \right) }\\
&= (n+1)^2 e^{-(\log n)^2}\left( 1 + O\left( \frac{(\log n)^4}{n} \right)\right)  \\
&=o(1).
\end{align*}
In view of \eqref{eq:keyidentity}, this implies
\be\label{eq:factor2}
\frac{a_n c_n - b_n^2}{w a_n^2} = 1+o(1), \quad (n \rightarrow \infty).
\ee

\noindent {\bf Claim.} We have, uniformly for $1+\frac{(\log n)^2}{n} \leq w \leq 2$,
\be\label{eq:factor1}
\sqrt{\frac{c_n}{a_n}} = n\left(1+O\left(\frac{1}{(\log n)^2}\right)\right), \quad (n \rightarrow \infty).
\ee

For the proof of the Claim, we use the identities
\be\label{eq:an}
a_n =\frac{w^{n+1}-1}{w-1},
\ee
\be\label{eq:cn}
c_n = w \frac{ w^n\left[n^2(w-1)^2 - (2n+1)(w-1)\right] - (w+1)}{(w-1)^3},
\ee
which imply
\begin{align*}
\frac{c_n}{a_n} &= w^{n+1} \frac{ \left[n^2(w-1)^2 - (2n+1)(w-1)\right]-\frac{w+1}{w^n}}{(w^{n+1}-1)(w-1)^2} \\
&= \frac{ \left[n^2(w-1)^2 - (2n+1)(w-1)\right]-(w+1)w^{-n}}{(1-w^{-(n+1)})(w-1)^2}.
\end{align*}

We have, uniformly for $1+\frac{(\log n)^2}{n} \leq w \leq 2$,
\begin{align*}
w^{-n} &\leq \left(1+\frac{(\log n)^2}{n} \right)^{-n}\\
&= e^{-n \log \left(1+\frac{(\log n)^2}{n} \right)} \\
&= e^{-n \left( \frac{(\log n)^2}{n} + O\left( \frac{(\log n)^4}{n^2} \right) \right)}\\
&= e^{- (\log n)^2} \left(1 + O\left( \frac{(\log n)^4}{n} \right) \right).
\end{align*}
This implies
\be\label{eq:wn}
w^{-n} = O\left( e^{- (\log n)^2} \right),
\ee
and 
\be
1-w^{-n-1} = 1+O\left( e^{- (\log n)^2} \right),
\ee
and hence we have
\be\label{eq:ratiosep}
\frac{c_n}{a_n} = \left( n^2 -\frac{(2n+1)}{w-1} - \frac{(w+1)w^{-n}}{(w-1)^2} \right) \left( 1+O\left( e^{- (\log n)^2} \right)\right).
\ee
We have, uniformly for $1+\frac{(\log n)^2}{n} \leq w \leq 2$,
\be
\frac{1}{w-1} \leq \frac{n}{(\log n)^2}
\ee
which implies
\be
\frac{2n+1}{w-1} = O\left( \frac{n^2}{(\log n)^2} \right),
\ee
and (using \eqref{eq:wn} as well)
\begin{align*}
\frac{(w+1)w^{-n}}{(w-1)^2} &= O\left( \frac{ n^2 e^{-(\log n)^2}}{(\log n)^4} \right)  \\
&= O\left( \frac{ n^2}{(\log n)^2}  \right).
\end{align*}
Applying this in \eqref{eq:ratiosep} gives
\be
\frac{c_n}{a_n} = n^2  \left( 1+O\left( \frac{1}{(\log n)^2} \right)\right).
\ee
This verifies the statement in the Claim.

The Claim, together with \eqref{eq:factor2}, implies
\be
F_n(w) = \frac{n}{2\sqrt{w}(w-1)} (1+o(1)), \quad (n \rightarrow \infty),
\ee
where the convergence is uniform over the interval
$I_n:=[1+\frac{(\log n)^2}{n},2]$,
and this completes the proof of the lemma.
\end{proof}

\subsection{Proofs of Propositions \ref{prop:inside}, \ref{prop:middle}, and \ref{prop:tail}}

\begin{proof}[Proof of Proposition \ref{prop:inside}]
The Lemmas \ref{lemma:inside} and \ref{lemma:sliver} take care of the intervals $0 \leq w \leq 1-\frac{\log n}{n}$ and $1+\frac{1}{n} \leq w \leq 1+\frac{(\log n)^2}{n}$.
On the interval $ 1-\frac{1}{n} \leq w \leq  1 + \frac{1}{n}$, we use the estimate
\be
F_n(w) \leq n^2
\ee
in order to conclude
\be\label{eq:near1}
\int_{1 - \tfrac{1}{n}}^{1+\tfrac{1}{n}} F_n(w) dw \leq 2n.
\ee
On the remaining interval $ 1 - \tfrac{\log n}{n} \leq w \leq 1-\frac{1}{n} $, we use the estimate \eqref{eq:est3}
which implies (since $\tfrac{1}{\sqrt{w}} \leq 2$ on this interval)
\be
F_n(w) \leq \frac{n}{1-w}.
\ee
Hence,
\begin{align*}
\int_{1-\tfrac{\log n}{n}}^{1-\tfrac{1}{n}} F_n(w) dw &\leq -n \log (1-w) \bigg\rvert_{w=1-\tfrac{\log n}{n}}^{w=1-\tfrac{1}{n}} \\
&= n \left( \log \left(\frac{\log n}{n}\right) - \log \left(\tfrac{1}{n}\right) \right) \\
&= n \log \log n.
\end{align*}
Together with \eqref{eq:near1} and the estimates in Lemmas \ref{lemma:inside} and \ref{lemma:sliver}, this implies
\be
\int_0^{1+\tfrac{(\log n)^2}{n}} F_n(w) dw \leq (4+\sqrt{2})n + 3 n \log \log n,
\ee
which implies the statement in the proposition.
\end{proof}

\begin{proof}[Proof of Proposition \ref{prop:middle}]
Let $I_n := [1+\frac{(\log n)^2}{n}, 1+ \frac{1}{\log n}]$. Applying Lemma \ref{lemma:uniform}, we have (as $n \rightarrow \infty$)
\be\label{eq:mainasymp1}
\int_{I_n} \frac{\sqrt{a_n^2 c_n^2-a_n c_n b_n^2}}{2 w a_n^2} dw = (1+o(1)) \int_{I_n} \frac{n}{2\sqrt{w}(w-1)} dw,
\ee
where we have relied at this step on the uniformity of the $o(1)$ estimate.
We use the change of variables $w=r^2$, $dw = 2r dr$.
Then
\begin{align*}
    \int_{I_n} \frac{1}{2\sqrt{w}(w-1)} dw &= \int_{I_n'}  \frac{1}{r^2-1} dr \\
    &= \frac{1}{2}\int_{I_n'}  \frac{1}{r-1} - \frac{1}{r+1} dr \\
    &= \frac{1}{2} \left(
    \log (r-1) - \log (r+1) \right) \bigg\rvert_{\sqrt{1+\frac{(\log n)^2}{n}}}^{\sqrt{1+\frac{1}{\log n}}} \\
    &= - \frac{1}{2}
    \log \left(\frac{(\log n)^2}{n}\right) + O(1) \\
     &= \frac{1}{2}
    \log n - \log \log n+ O(1)\\
    &= \left(\frac{1}{2}
    \log n \right)(1+o(1)) .
\end{align*}
Together with \eqref{eq:mainasymp1}, this proves the lemma.
\end{proof}

\begin{proof}[Proof of Proposition \ref{prop:tail}]
In view of Lemma \ref{lemma:outside}, it suffices to show
\be\label{eq:near2}
\int_{1+\frac{1}{\log n}}^{2} F_n(w) dw = O(n \log \log n), \quad (n \rightarrow \infty).
\ee
 Applying Lemma \ref{lemma:uniform}, we have (as $n \rightarrow \infty$)
\be\label{eq:mainasymp2}
\int_{I_n} F_n(w) dw = (1+o(1)) \int_{I_n} \frac{n}{2\sqrt{w}(w-1)} dw,
\ee
where we have relied at this step on the uniformity of the $o(1)$ estimate.
Using again the change of variables $w=r^2$, $dw = 2r dr$, we have
\begin{align*}
    \int_{I_n} \frac{1}{2\sqrt{w}(w-1)} dw 
    &= \frac{1}{2} \left(
    \log (r-1) - \log (r+1) \right) \bigg\rvert_{\sqrt{1 + \frac{1}{\log n}}}^{\sqrt{2}} \\
    &= - \frac{1}{2}
    \log \left(\frac{1}{\log n}\right) + O(1) \\
    &= \left(\frac{1}{2}
    \log \log n \right)(1+o(1)) .
\end{align*}
Together with \eqref{eq:mainasymp2}, this implies \eqref{eq:near2} and completes the proof of the proposition.
\end{proof}

\subsection{Proof of Theorem \ref{thm:UV}}
Let $V$ be a subset of the unit disc whose closure is also contained in the unit disc, i.e., $V$ is contained in $|z| \leq R$ for some $0<R<1$. We have 
\begin{equation*}
\mathbb{E}N_H(V)=\frac{1}{\pi}\int_VF_n(|z|^2)\,dA(z),
\end{equation*}
where 
\be\label{eq:Fnz}
F_n(|z|^2) = \frac{1}{2|z|^2} \sqrt{\frac{c_n(|z|^2)}{a_n(|z|^2)}} \sqrt{\frac{a_n(|z|^2) c_n(|z|^2) - b_n(|z|^2)^2}{a_n(|z|^2)^2}},
\ee
with
\begin{align*}
a_n(|z|^2)=\sum_{j=0}^n|z|^{2j}&=\frac{1-|z|^{2n+2}}{1-|z|^2}
\\b_n(|z|^2)=\sum_{j=0}^nj|z|^{2j}&=|z|^2\frac{n|z|^{2n+2}-(n+1)|z|^{2n}+1}{(1-|z|^2)^2}
\\c_n(|z|^2)=\sum_{j=0}^nj^2|z|^{2j}&=|z|^2\frac{-n^2|z|^{2n+4}+(2n^2+2n-1)|z|^{2n+2}-(n+1)^2|z|^{2n}+|z|^2+1}{(1-|z|^2)^3}.
\end{align*}
for $|z|\ne1$. 
We compute that for $|z|<1$
\be\label{eq:limitFn}
\lim_{n\to\infty}F_n(|z|^2)=\frac{1}{2}\frac{\sqrt{|z|^2+1}}{(1-|z|^2)^2}
\ee
which is clearly integrable on $V$.  Moreover, this limit converges uniformly for $|z|<R$, since $a_n(w)$, $b_n(w)$, and $c_n(w)$ are the Taylor polynomials for
\begin{equation*}\frac{1}{1-w},\quad\frac{w}{(1-w)^2},\text{ and }\frac{w^2+w}{(1-w)^3}\end{equation*}
respectively which have radius of convergence $1$. 
Since  $a_nc_n-b_n^2>0$ and $a_n \geq 1 > 0$, we have uniform convergence of the integrand on $|z| \leq R$, so that the limit can be taken inside the integral and using \eqref{eq:limitFn} we obtain
\begin{equation*}
\lim_{n\to\infty}\mathbb{E}N_H(V)=\frac{1}{2\pi}\int_V\frac{\sqrt{1+|z|^2}}{(1-|z|^2)^2} dA(z)=C_V,
\end{equation*}
which proves the first part of the theorem (while also providing a formula for $C_V$).

Let $U$ be an open set whose closure is contained in the exerior of the closed unit disk, equation (\ref{eq:est3}) implies that
$F_n(|z|^2)/n$
is bounded by an integrable function for all $n$ and thus we may take the limit inside the integral, which gives
\begin{align*}
\lim_{n\to\infty}\frac{\mathbb{E}N_H(U)}{n}&=\lim_{n\to\infty}\int_U\frac{1}{2\pi|z|^2}\frac{\sqrt{a_nc_n(a_nc_n-b_n^2)}}{na_n^2}\,dA(z)
\\&=\int_U\lim_{n\to\infty}\frac{1}{2\pi|z|^2}\frac{\sqrt{a_nc_n(a_nc_n-b_n^2)}}{na_n^2}\,dA(z)
\\&=\frac{1}{2\pi}\int_U\frac{1}{|z|(|z|^2-1)}\,dA(z)=C_U,
\end{align*}
where we have omitted the details of the computation of the limit in the last line which is again elementary.
This proves the theorem (while also providing a formula for $C_U$).

\section{Proof of Theorems \ref{thm:existence} and \ref{thm:UB}}
\label{sec:existenceproof}

\begin{proof}[Proof of Theorem \ref{thm:existence}]
We use an adaptation of the idea from \cite{LundRand}, except in the current setting the leading coefficient in $p$ will taken to be random and non-perturbative.

Let us sample 
$$q(z) = \sum_{k=0}^{n-1} e^{i \theta_k} z^k,$$
with i.i.d.\@ coefficients $e^{i \theta_k}$ generated by angles $\theta_k$ sampled uniformly from the interval $[0,2\pi]$.
We then take 
$p(z) = \beta z^n + q(z)$,
with $\beta$ being a $\pm 1$ Bernoulli random variable with equal probability of taking the values $1$ and $-1$ (in fact $\beta$ can be removed by multiplying $p(z) + \ol{q(z)}$ by the sign of $\beta$ that does not alter the number of zeros, but we include it in order to satisfy the hypothesis of Lemma \ref{lemma:expected} below).  Note that, unlike in the previous model, in this ad hoc model $p$ and $q$ are highly dependent.

Taking real and imaginary parts of the equation $p(z) + \ol{q(z)}=0$ then gives the following system of equations.
\be\label{eq:system}
\begin{cases}
\beta \Re\{ z^n \} + 2\Re \{q(z)\} = 0, \\
\Im \{ z^n \}  = 0.
\end{cases}
\ee
The second equation in \eqref{eq:system} describes $n$ lines passing through the origin, and the solutions of the system \eqref{eq:system} correspond to the zeros of the univariate polynomials obtained from restricting the first equation to each of these lines.

Writing $z=r e^{i \theta_j}$ with $r \in \RR$ and $\theta_j = \frac{j \pi}{n}$ for $j=0,1,2,...,n-1$, we obtain, after substitution into the first equation in \eqref{eq:system},
\be\label{eq:polar2}
\beta r^n + \sum_{k=0}^{n-1} 2 \cos(k \theta_j + \theta_k) r^k = 0.
\ee
We notice that the random angle $k \theta_j - \theta_k$ is uniformly distributed (up to addition of a multiple of $2\pi$) in the interval $[0,2 \pi]$, and hence the random coefficients $2 \cos(k \theta_j + \theta_k)$ are identically distributed and have (by symmetry) zero mean.  Moreover, by a simple elementary computation we find that they have variance one.

Since the event $p(0)+\ol{q(0)} = 0$ has zero probability, we have that the total number $N$ of solutions of the system \eqref{eq:system} is almost surely given by 
\be\label{eq:Nsum}
N = \sum_{j=0}^{n-1} N_j,
\ee 
where $N_j$ denotes the number of solutions of \eqref{eq:polar2}. 
By linearity of expectation
\begin{align*}
\EE N &= \sum_{j=0}^{n-1} \EE N_j \\
      &= n \EE N_0,
\end{align*}
where the second line follows from the fact that the left hand sides of \eqref{eq:polar2} are identically distributed distributed (as $j$ varies).

Notice that the number $N_0$ of real zeros of the polynomial $f(x) = \sum_{k=0}^{n} \beta_{k} x^k$ (with $\beta_k := 2\cos \theta_k$) is equivalent to the number of real zeros of the transformed polynomial $g(x) = x^n f(x^{-1}) = \sum_{k=0}^{n} \beta_{n-k} x^k$.  Next we will use the following result that is a special case of \cite[Thm. 1.4]{DoNguyenVu}.

\begin{lemma}\label{lemma:expected}
Let $\alpha_k$ be a sequence of independent random variables with zero mean, unit variance, and uniformly bounded $(2+\e)$-moments.
Let $g(x) = \sum_{k=0}^{n} \alpha_k x^k$.
\be
\EE \, \# \{ x\in \RR : g(x)=0\} = \frac{2}{\pi} \log n + O(1), \quad (n \rightarrow \infty).
\ee
\end{lemma}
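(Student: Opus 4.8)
The statement in question is Lemma \ref{lemma:expected}, which asserts that for independent random variables $\alpha_k$ with zero mean, unit variance, and uniformly bounded $(2+\e)$-moments, the random polynomial $g(x) = \sum_{k=0}^n \alpha_k x^k$ has expected number of real zeros equal to $\frac{2}{\pi}\log n + O(1)$.

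The plan is to deduce this directly from the cited universality result \cite[Thm. 1.4]{DoNguyenVu} of Do, Nguyen, and Vu, which gives precise asymptotics for the expected number of real roots of random polynomials (Kac-type ensembles) under exactly these moment hypotheses. First I would recall that in the classical Gaussian Kac case, the Kac--Rice formula combined with the asymptotic analysis of Kac (and later refinements) yields $\EE\#\{x\in\RR: g(x)=0\} = \frac{2}{\pi}\log n + C + o(1)$ for an explicit constant $C$; the content of the Do--Nguyen--Vu theorem is that this asymptotic persists, up to an $O(1)$ error, for general coefficient distributions satisfying the stated moment bounds. So the proof amounts to checking that our hypotheses (zero mean, unit variance, uniformly bounded $(2+\e)$-moments) are precisely the hypotheses of their Theorem 1.4, and that their conclusion, specialized to the monomial basis $\{1, x, \dots, x^n\}$, gives the claimed formula. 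This is essentially a citation with a verification that the normalizations match.

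The one genuine point requiring care is the normalization of the coefficients and the form of the error term. In our application (equation \eqref{eq:polar2}), the coefficients are $\beta_k = 2\cos\theta_k$ with $\theta_k$ uniform on $[0,2\pi]$; these have mean zero and — after the elementary computation $\EE[4\cos^2\theta] = 2$ — variance $2$, not $1$, and moreover the leading coefficient $\beta$ is a $\pm 1$ Bernoulli variable with variance $1$, so the coefficient sequence is not quite i.i.d.\ and not quite unit-variance. Thus I would state Lemma \ref{lemma:expected} for unit-variance sequences (as written) and, in the surrounding argument, note that rescaling $g$ by a constant does not change its real zeros, so we may divide through by $\sqrt{2}$; the mismatch of the single leading coefficient (variance $1$ versus $2$) contributes only an $O(1)$ perturbation to the expected count and is absorbed into the error term — this is exactly the kind of robustness the $O(1)$ error in \cite{DoNguyenVu} is designed to accommodate (their theorem allows the coefficients to be non-identically distributed with uniformly controlled moments). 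The main obstacle, then, is not a deep one: it is simply to extract from \cite[Thm. 1.4]{DoNguyenVu} the statement in the exact generality needed (non-identical coefficients, a single anomalous coefficient, constant rescaling) and to confirm that the $(2+\e)$-moment hypothesis is met — here $\beta_k = 2\cos\theta_k$ is bounded, hence has bounded moments of all orders, so the hypothesis is trivially satisfied.

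Concretely, the proof of Lemma \ref{lemma:expected} itself would read: the hypotheses are those of \cite[Thm. 1.4]{DoNguyenVu}; applying that theorem to $g(x) = \sum_{k=0}^n \alpha_k x^k$ yields $\EE\#\{x\in\RR: g(x)=0\} = \frac{2}{\pi}\log n + O(1)$ as $n\to\infty$, which is the assertion. I would not reproduce the proof of the Do--Nguyen--Vu theorem, which proceeds via a universality/comparison argument reducing to the Gaussian case together with the classical Kac asymptotics; citing it is appropriate and keeps the paper self-contained at the level of its own contributions. If a referee wanted more, one could add a remark that the Gaussian case is due to Kac and that the universality is the novel input of \cite{DoNguyenVu}, but no further argument is needed here.
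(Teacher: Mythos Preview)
Your approach is correct and identical to the paper's: the lemma is not proved in the paper but simply introduced as ``a special case of \cite[Thm.~1.4]{DoNguyenVu},'' which is exactly what you propose. Your additional discussion of the application---noting that the coefficients $2\cos(k\theta_j+\theta_k)$ in \eqref{eq:polar2} actually have variance $2$ rather than $1$ as the paper asserts---is a valid observation about the surrounding proof of Theorem~\ref{thm:existence} (and your remedy via rescaling is the right one), but it concerns the use of the lemma rather than the lemma itself.
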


Since the coefficients $\alpha_k = \beta_{n-k}$ satisfy the conditions in Lemma \ref{lemma:expected}, we conclude that $\EE N_0 \sim \frac{2}{\pi} \log n$ as $n \rightarrow \infty$, and this gives
\be
\EE N = \frac{2}{\pi} n \log n + O(n), \quad (n \rightarrow \infty),
\ee
Since there is at least one instance in parameter space attaining the average, this 
proves the theorem.
\end{proof}

\begin{proof}[Proof of Theorem \ref{thm:UB}]
As in the proof of Theorem \ref{thm:existence},
taking real and imaginary parts of the equation $p(z) + \ol{q(z)}=0$ gives the system of equations
\be\label{eq:systemLittlewood}
\begin{cases}
\beta \Re\{ z^n \} + 2\Re \{q(z)\} = 0, \\
\Im \{ z^n \}  = 0,
\end{cases}
\ee
and the solutions of the system \eqref{eq:systemLittlewood} correspond to the zeros of the univariate polynomials obtained from restricting the first equation to each of the lines from the solution set of the second equation.
Writing $z=r e^{i \theta_j}$ with $r \in \RR$ and $\theta_j = \frac{j \pi}{n}$ for $j=0,1,2,...,n-1$, we obtain, after substitution into the first equation in \eqref{eq:system},
\be\label{eq:polar3}
r^n + \sum_{k=0}^{n-1} 2 \cos(k \theta_j + \theta_k) r^k = 0.
\ee
Since the constant coefficient is $2$ and the remaining coefficients are in absolute value at most $2$, by \cite[Thm. 4.1]{BEK}, there exists $c_1>0$ such that \eqref{eq:polar3} has at most $c_1 \sqrt{n}$ solutions in $\RR \setminus [-1,1]$.

Making the substitution $r=2x$ in \eqref{eq:polar3} we obtain
\be\label{eq:subst}
x^n + \sum_{k=0}^{n-1} 2^{k+1-n} \cos(k \theta_j + \theta_k) x^k = 0.
\ee
Noting that the leading coefficient equals $1$ while the remaining coefficients $2^{k+1-n} \cos(k \theta_j + \theta_k)$ are each at most $1$ in absolute value, by \cite[Thm. 4.1]{BEK}, there exists $c_2>0$ such that \eqref{eq:subst} has at most $c_2 \sqrt{n}$ solutions in $\RR \setminus [-1,1]$, and hence \eqref{eq:polar3} has at most $c \sqrt{n}$ solutions in  $\RR \setminus [-1/2,1/2]$ and in partiular at most this many solutions in $\RR \setminus [-1,1]$.

We conclude that \eqref{eq:subst} has at most $2c\sqrt{n}$ real solutions for each $k=0,1,2,...,n-1$, and hence with $C=c_1+c_2$ the system \eqref{eq:polar3} has at most $C \cdot n \sqrt{n}$ solutions, as desired.
\end{proof}

\section{Concluding Remarks}\label{sec:concl}

\subsection*{Intensity of zeros inside the unit disk} The proof of Theorem \ref{thm:UV} provided formulas for the constants, $C_U, C_V$.  In particular, the average number of zeros of $H(z) = p_n(z) + \ol{q_n(z)}$ in a region $V$ whose closure is contained in the open unit disk converges to
$$ C_V = \int_V \frac{1}{2\pi} \frac{\sqrt{1+|z|^2}}{(1-|z|^2)^2} dA(z).$$
Viewing the zeros of $H$ as a point process, the above integrand is the limit of the so-called \emph{first intensity}.  It is interesting that we have the following estimate
\be
 \frac{1}{2\pi} \frac{\sqrt{1+|z|^2}}{(1-|z|^2)^2} < \frac{1}{\pi} \frac{1}{(1-|z|^2)^2}, \quad 0<|z|<1,
\ee
showing that inside the unit disk the limiting first intensity for the harmonic case is dominated by that of the analytic case \cite{EdelmanKostlan95}, \cite{SodinZeros}.  To reiterate, the addition of the anti-analytic term $\ol{q(z)}$ not only has a much more dramatic effect in the exterior of the unit disk $|z|>1$ (as indicated by the order-$n$ asymptotic growth in the statement of Theorem \ref{thm:UV}) than in the interior, it actually \emph{diminishes} the limiting first intensity inside the unit disk,
see Fig. \ref{fig:intensity} for a depiction of this at degree $n=m=30$.

\begin{figure}[h]
\centering
\includegraphics[scale=0.5]{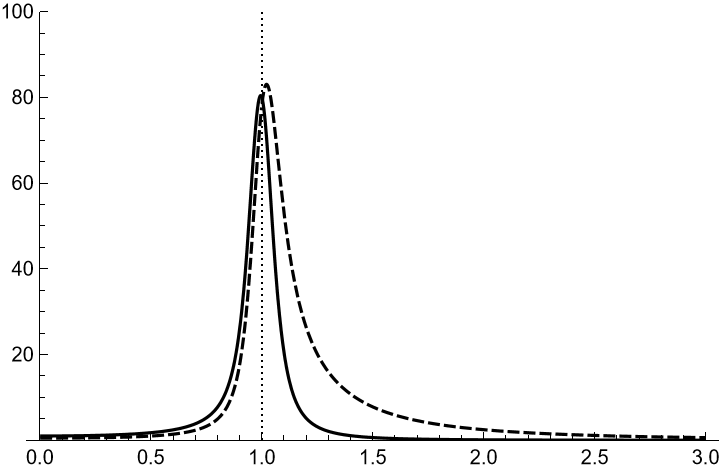}
\hspace{0.1in}
\includegraphics[scale=0.5]{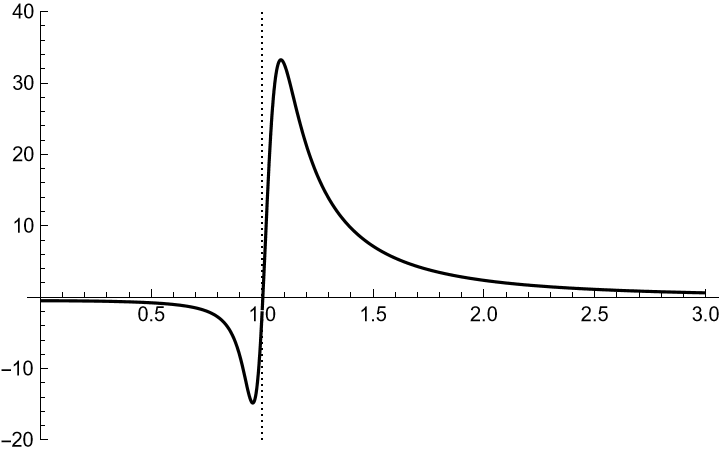}
\caption{Left: The first intensity with respect to radius plotted for $n=m=30$ for the analytic case (solid) and the harmonic case (dotted).
Right: The difference between the first intensities (harmonic minus analytic) for $n=m=30$, showing a noticeable abrupt transition as the radius crosses unity.}
\label{fig:intensity}
\end{figure}

\subsection*{The regime $m = \alpha n$, $0<\alpha<1$}
As mentioned in the introduction, the second named author showed in his thesis \cite{AndyThesis} that the average number of zeros of $H_{n,m}(z) = p_n(z) + \ol{q_m(z)}$ is asymptotic to $n$ when $m$ is fixed and $n \rightarrow \infty$.  From Theorem \ref{thm:iid} we have that the asymptotic average gains a logarithmic factor when $m = n$.  It is natural to ask about the intermediate regime, say $m = \alpha n$, $0<\alpha<1$.
Since $q_m(z)$ may be viewed as an independent truncated copy of $p_n(z)$, one might be tempted to guess, by analogy with the results in \cite{Lerariotruncated}, \cite{AndyZach}, that the average number of zeros is asymptotic to $\frac{1}{2} m \log m = \frac{\alpha}{2} n \log (\alpha n) \sim \frac{\alpha}{2} n \log n$ as $n \rightarrow \infty$.  However, unlike in the case of the Kostlan or Weyl ensembles, for the Kac ensemble an independent copy of $p_n(z)$ truncated to degree $m = \alpha n$ is negligibly small in comparison with $p_n(z)$ throughout the most important region $|z|>1+ \frac{1}{n}$, and this leads us to conjecture that in the case $m = \alpha n$ the average number of zeros is asymptotically proportional to $n$ (hence resembling the case when $m$ is fixed) with the constant of proportionality perhaps depending on $m$.

\subsection*{Asymptotic expansion of the expected number of zeros}

Theorem \ref{thm:iid} provides the precise leading order asymptotic for the average number of zeros when $m=n \rightarrow \infty$.  Obtaining additional terms in an asymptotic expansion seems challenging.  
One could attempt to determine additional terms in the asymptotic expansion by further refining the method used in our proof of Theorem \ref{thm:refined}.  An alternative approach, perhaps with some hope of obtaining a complete asymptotic expansion, is to devise a uniformly converging asymptotic representation of the integrand.  The particular form of the integrand's nondominated convergence suggests that this will require a multi-scale approach as is commonly used in the method of matched asymptotic expansions coming from boundary layer theory \cite{Lagerstrom}, where the choice of ``boundary layers'' may be expected to resemble the subdivision of intervals used in the proof of Theorem \ref{thm:refined}.

In the known asymptotic expansion \cite{Wilkins} for the average number of real zeros of a random Kac polynomial of degree $n$, the logarithm only appears in the leading order term, and the remaining terms are (constant multiples of) integer powers of $n$.
A na\"ive guess is that the same holds for the number of complex zeros of a harmonic Kac polynomial $H(z) = p_n(z) + \ol{q_n(z)}$,
and this leads us to ask whether $\EE N_{H} (\CC)$ admits a complete asymptotic expansion of the form
$$ \EE N_{H} (\CC) \sim \frac{1}{2} n \log n + c_1 n + c_0 + c_{-1} n^{-1} + c_{-2} n^{-2} + \cdots $$
with constant coefficients $c_k$, for $k=1,0,-1,-2,...$



\subsection*{Universality}
One obvious direction for future research is prompted by the fact that all of the previous results on complex zeros of harmonic polynomials are confined to Gaussian models (discounting the non-Gaussian model used here in the proof of Theorem \ref{thm:existence} since it is customized for the particular purpose of the existence proof and not suited for studying harmonic polynomials in a broad setting).  In stating Conjecture \ref{conj:iid} we have already indicated one specific problem in this direction.  More broadly, for each of the Gaussian models mentioned in the survey of literature provided Section \ref{sec:models}, one may ask what happens when the coefficients are independent but not Gaussian (while retaining the same sequence of variances).
Based on robust results for a variety of models of random analytic polynomials showing local universality for the number of real zeros as well as the number of complex zeros, one may expect that the same asymptotics found above as well as in \cite{LiWei}, \cite{Lerariotruncated}, \cite{AndyZach}, \cite{AndyThesis} continue to hold in non-Gaussian settings (under some control on the moments of the distribution).

\subsection*{Hengartner's valence problem for logharmonic polynomials}
As shown in \cite{LundRand} as well as Theorem \ref{thm:existence} of the current paper, in addition to providing a broad complementary perspective, probabilistic studies of harnomic polynomials can lead to new insights on the deterministic side.  Indirect probabilistic methods surely hold further potential for the study of related extremal problems.  For instance, instead of considering the sum $p(z) + \ol{q(z)}$, one may ask about the maximal valence (number of preimages of a prescribed point in the complex plane) of the product $p(z) \ol{q(z)}$, which is referred to as a \emph{logharmonic} polynomial (the valence is finite if $p$ is not a constant multiple of $q$).  This problem was posed by Hengartner \cite{BL} (see also \cite{BH}, \cite{AH}).  Recent progress \cite{KLP} estimating the valence of logharmonic polynomials includes proof of a Bshouty and Hengartner's conjectured \cite{BH} upper bound of $3n-1$ for the case when $q$ is linear, and an improvement on the upper bound provided by Bezout's theorem for each $m,n$, but there is a lack of illuminating examples in this setting, and it is still not known whether the maximal valence increases quadratically in $n,m$, or indeed whether it even increases faster than linearly in $n,m$.  Probabilistic methods may be promising here for showing the existence of examples with an abundance of zeros, say for $m=n$.  Hence, it seems an enticing open problem to obtain asymptotics, or even lower bounds, for the average number of zeros when $p,q$ are sampled from one of the models discussed in Section \ref{sec:models} above (or from another model chosen strategically to generate many zeros on average).

\subsection*{Critical lemniscates}

The critical set (vanishing set of the Jacobian) of $H(z) = p(z) + \ol{q(z)}$ is the zero set $|p'(z)|^2-|q'(z)|^2=0$ or equivalently the level set $\left| \frac{p'(z)}{q'(z)} \right| =1$, a rational lemniscate. This set generically consists of at most $n-1$ smooth connected components, each diffeomorphic to a circle. As $z$ crosses the critical set the orientation of the mapping changes sign, an important example of behavior of harmonic maps that is not exhibited by analytic maps.
The topological complexity of the critical set (number of connected components) marks how wildly the mapping changes orientation and hence serves as one indicator for how far the mapping deviates from analytic behavior.  The problem of studying the critical lemniscates of random harmonic polynomials has been posed in \cite{Lerariotruncated}.
In the case when $p,q$ have i.i.d.\@ coefficients we suspect, partially guided by the analysis of polynomial lemniscates associated to Kac polynomials in \cite{KoushikErik}, that as $m=n \rightarrow \infty$ the average number of connected components of the critical set is asymptotically proportional to $n$.

\subsection*{Gravitational lensing by point masses with uniformly distributed locations}

As we have mentioned in the introduction,
the current setting of zeros of random harmonic polynomials runs parallel to some central problems in the theory of gravitational lensing, namely, in stochastic microlensing.
For instance, consideration of gravitational lensing by point masses leads to the lensing equation
\be
z - \sum_{k=1}^n \frac{m}{\ol{z} - \ol{z_k}} = 0,
\ee
where we take $n$ equal (for simplicity) point masses with mass $m$ at random positions $z_k \in \CC$ in the lens plane.  When the positions $z_k$ are sampled independently and uniformly from a disk, and the mass $m$ is fixed, the number of solutions (which represent lensed images of a background source) is shown to be asymptotically $n$ in \cite{PettersStochastic2}.  This outcome is rather deficient as it is asymptotically the fewest lensed images possible (from Morse theory it follows that there are at least $n+1$ lensed images \cite{Petters1}).
On the other hand, according to numerical experiments \cite{SeanThesis}, if instead of taking $m$ fixed we take the mass $m = c/n$ to be inversely proportional to $n$ then the average number of zeros appears to be asymptotically $\lambda n$ with $\lambda > 1$.
It seems a worthwhile open problem to try to show this analytically, which reduces to asymptotic analysis of the Kac-Rice type integral obtained in \cite{PettersStochastic2}.


\bibliographystyle{abbrv}
\bibliography{RandHarmonic}

\end{document}